\newcommand{\Ra}{\Rightarrow}
\newcommand{\w}{\omega}
\newcommand{\Z}{\mathcal Z}
\newcommand{\II}{\mathbb I}
\newcommand{\IN}{\mathbb N}
\newcommand{\IR}{\mathbb R}
\newcommand{\IQ}{\mathbb Q}
\newcommand{\lin}{\mathsf{lin}}
\newcommand{\aff}{\mathsf{aff}\!}
\newcommand{\V}{\mathcal V}
\newcommand{\W}{\mathcal W}
\newcommand{\e}{\varepsilon}
\newcommand{\M}{\mathcal M}
\newcommand{\I}{\mathcal I}
\newcommand{\DD}{\mathcal D}
\newcommand{\LL}{\mathcal L}
\newtheorem{theorem}{Theorem}
\newtheorem{problem}[theorem]{Problem}
\newtheorem{corollary}[theorem]{Corollary}
\newtheorem{proposition}[theorem]{Proposition}
\newtheorem{lemma}[theorem]{Lemma}
\begin{document}

\title{Detecting $\sigma Z_n$-sets in topological groups and linear metric spaces}
\author{Taras Banakh}
\address{Ivan Franko National University of Lviv (Ukraine) and Jan Kochanowski University in Kielce (Poland)}
\email{t.o.banakh@gmail.com}
\keywords{$Z$-set, $\sigma Z$-space, analytic set, topological group, convex set, linear metric space}
\subjclass{57N17, 03E15, 54H05}

\begin{abstract} We prove that if an analytic subset $A$ of a linear metric space $X$ is not contained in a $\sigma Z_\w$-subset of $X$ then for every Polish convex set $K$ with dense affine hull in $X$ the sum $A+K$ is non-meager in $X$ and the sets $A+A+K$ and $A-A+K$ have non-empty interior in the completion $\bar X$ of $X$. This implies two results:
\begin{itemize}
\item an analytic subgroup $A$ of a linear metric space $X$ is a $\sigma Z_\w$-space  if $A$ is not Polish and $A$ contains a Polish convex set $K$ with dense affine hull in $X$;
\item a dense convex analytic subset $A$ of a linear metric space $X$ is a $\sigma Z_\w$-space  if $A$ contains no open Polish subspace and $A$ contains a Polish convex set $K$ with dense affine hull in $X$.
\end{itemize}
\end{abstract}

\maketitle

A topological space $X$ is {\em analytic} if it is a metrizable continuous image of a Polish space. A {\em Polish space} is a separable topological space homeomorphic to a complete metric space. It is well-known \cite[14.2]{Ke} that each Borel subset of a Polish space is analytic. By Lusin-Sierpinski Theorem \cite{21.6}[Ke], each analytic subset $A$ of a Polish space $X$ has the {\em Baire property}, i.e., $(A\setminus U)\cup(U\setminus A)$ is meager in $X$ for some open set $U\subset X$.

By the classical result of S.~Banach \cite{Banach}, each non-complete analytic topological group is meager, i.e., can be represented as the countable union of nowhere dense subsets. 
This result can be easily derived from the following known fact attributed to Piccard \cite{Pic} and Pettis \cite{Pet} (see [9.9]{Ke}).

\begin{theorem}[Piccard-Pettis]\label{t:pp} If two analytic subsets $A,B$ of a Polish group $X$ are non-meager in $X$, then the set $AB$ has non-empty interior and $AA^{-1}$ is a neighborhood of unit in $G$.
\end{theorem}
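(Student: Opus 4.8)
The plan is to derive the statement from two ingredients: the Baire property of analytic sets (the Lusin--Sierpi\'nski theorem quoted above) and the standard localization trick for translations in a topological group. Throughout I will use freely that every self-homeomorphism of $X$ --- in particular every left translation, every right translation, and the inversion map $x\mapsto x^{-1}$ --- sends meager sets to meager sets, and that every open subspace of the Polish space $X$ is a Baire space; consequently, if $W$ is a nonempty open set and two subsets $P,Q\subseteq X$ each satisfy that $W\setminus P$ and $W\setminus Q$ are meager, then $P\cap Q\ne\emptyset$. There is no deep obstacle here: the whole difficulty is bookkeeping, namely keeping straight which translation turns which comeager set into a comeager subset of which open set, and checking that the auxiliary open sets produced along the way are nonempty.

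\emph{Step 1 (localizing $A$ and $B$).} Since $A,B$ are analytic in the Polish space $X$, they have the Baire property, so there are open sets $O_A,O_B\subseteq X$ with $(O_A\setminus A)\cup(A\setminus O_A)$ and $(O_B\setminus B)\cup(B\setminus O_B)$ meager. Because $A$ is non-meager, $O_A\ne\emptyset$; moreover $O_A\setminus A$ is meager, so putting $U:=O_A$ we get a nonempty open $U$ whose complement inside it, $U\setminus A$, is meager. Likewise fix a nonempty open $V:=O_B$ with $V\setminus B$ meager. Applying the inversion homeomorphism to $U\setminus A$ shows that $U^{-1}\setminus A^{-1}$ is meager, with $U^{-1}$ nonempty open.

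\emph{Step 2 ($AB$ has nonempty interior).} I claim $UV\subseteq AB$; since $UV=\bigcup_{v\in V}Uv$ is a nonempty open set, this finishes this part. Fix $g\in UV$ and write $g=uv$ with $u\in U$, $v\in V$. Then $v=u^{-1}g\in U^{-1}g\cap V$, so $W:=U^{-1}g\cap V$ is a nonempty open set. Right translation by $g$ is a homeomorphism carrying $U^{-1}$ onto $U^{-1}g$ and $U^{-1}\setminus A^{-1}$ onto the meager set $U^{-1}g\setminus A^{-1}g$; hence $W\setminus A^{-1}g$ is meager. Also $W\setminus B$ is meager. By the remark in the first paragraph, $A^{-1}g\cap B\ne\emptyset$, so there are $a\in A$ and $b\in B$ with $a^{-1}g=b$, i.e. $g=ab\in AB$.

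\emph{Step 3 ($AA^{-1}$ is a neighborhood of the unit).} By continuity of multiplication the set $N:=UU^{-1}=\{g\in X:\ gU\cap U\ne\emptyset\}$ is an open neighborhood of the unit $e$. Fix $g\in N$ and pick $u_1,u_2\in U$ with $gu_1=u_2$, so $gU\cap U$ is a nonempty open set. Left translation by $g$ carries the meager set $U\setminus A$ onto $gU\setminus gA$, which is therefore meager, hence $(gU\cap U)\setminus gA$ is meager; and $(gU\cap U)\setminus A$ is meager as well. So $gA\cap A\ne\emptyset$, giving $a_1,a_2\in A$ with $ga_1=a_2$, that is $g=a_2a_1^{-1}\in AA^{-1}$. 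Thus $N\subseteq AA^{-1}$, so $AA^{-1}$ is a neighborhood of $e$ in $X$ (the symbol $G$ in the displayed statement being a misprint for $X$). This completes the proposed proof.
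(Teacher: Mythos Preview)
Your proof is correct and is the standard Baire-category argument for the Piccard--Pettis theorem: localize $A$ and $B$ to comeager pieces of nonempty open sets via the Baire property (available for analytic sets by Lusin--Sierpi\'nski), then use translation-invariance of the meager ideal to show $UV\subseteq AB$ and $UU^{-1}\subseteq AA^{-1}$. The bookkeeping in Steps~2 and~3 is clean; your observation that $G$ is a typo for $X$ is correct as well.

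The paper, however, does not supply its own proof of this theorem: it states the result as classical, attributing it to Piccard and Pettis and pointing to Kechris~[9.9] for a reference. So there is nothing to compare your argument against --- you have filled in what the paper treats as background.
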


Meager subsets of a topological space $X$ form a $\sigma$-ideal $\M(X)=\sigma\Z_0(X)$ which is the largest ideal among $\sigma$-ideals $\sigma \Z_n(X)$ generated by $Z_n$-sets in $X$. A subset $A\subset X$ of a topological space $X$ is called a {\em $Z_n$-set} in $X$ if $A$ is closed in $X$ and the complement $X\setminus A$ is $n$-dense in $X$. A subset $B\subset X$ is called {\em $n$-dense} in $X$ if the set $C(\II^n,B)$ of maps $\II^n\to B$ is dense in the space $C(\II^n,X)$ of all continuous functions $f:\II^n\to X$ defined on the $n$-dimensional cube $\II^n=[0,1]^n$. The function space $C(\II^n,X)$ is endowed with the  compact-open topology. Observe that a subset $D\subset X$ is dense if and only if $D$ is $0$-dense in $X$ and each $n$-dense set $D\subset X$ is $m$-dense in $X$ for every $m\ge n$.

The following properties of $Z_n$-sets follow immediately from the definitions:
\begin{itemize}
\item a subset $A\subset X$ is a $Z_0$-set if and only if $A$ is closed and nowhere dense in $X$;
\item for any numbers $0\le n\le m\le\w$ every $Z_m$-set in $X$ is a $Z_n$-set in $X$;
\item a subset $A\subset X$ is a $Z_\w$-set in $X$ if and only if $A$ is a $Z_n$-set in $X$ for every $n\in\IN$.
\end{itemize}
By $\sigma \Z_n(X)$ we shall denote the $\sigma$-ideal generated by $Z_n$-sets in $X$. It consists of subsets that can be covered by countably many $Z_n$-sets in $X$.
A topological space $X$ is called a {\em $\sigma Z_n$-space} if $X\in\sigma \Z_n(X)$. It follows that $\sigma\Z_m(X)\subset\sigma\Z_n(X)$ for any numbers $0\le n\le m\le\w$. So, the $\sigma$-ideal $\sigma\Z_\w(X)$ is the smallest ideal among the $\sigma$-ideals $\sigma\Z_n(X)$.

$Z_\w$-Sets and $\sigma Z_\w$-spaces play an important role in Infinite-Dimensional Topology, see \cite{BRZ}, \cite{BP}, \cite{Chap}, \cite{vM}, \cite{vM2}. In \cite[4.4]{DM} Dobrowolski and Mogilski asked the following problem related to the mentioned classical result of Banach \cite{Banach}.

\begin{problem}[Dobrowolski, Mogilski, 1990]\label{pr1} Is each non-complete analytic linear metric space a $\sigma Z_\w$-space?
\end{problem}

This problem was answered in negative by Banakh \cite{Ban99} (see also \cite[5.5.19]{BRZ}) who proved that the linear hull $\lin(E)$ of the Erd\"os set $E=\ell_2\cap\IQ^\w$ in the separable Hilbert space $\ell_2$ fails to be a $\sigma Z_\w$-space.

Yet, the following weaker version of Problem~\ref{pr1} still remains open (see \cite{Ban97}, \cite[2.2]{BCZ}).

\begin{problem}[Banakh, 1997]\label{pr2} Is each non-complete analytic linear metric space a $\sigma Z_n$-space for every $n\in\IN$?
\end{problem}

In this paper we shall give some partial positive answers to Problems~\ref{pr1} and \ref{pr2}, detecting analytic subsets in metrizable topological groups $G$ that belong to the $\sigma$-ideals $\sigma \Z_n(G)$ for $n\le \w$. In fact, we shall work with the smaller $\sigma$-ideals $\sigma\dot\Z_{\mathcal D}(G)$ and $\sigma\dot Z_n(G)$ defined as follows.

By a {\em metrizable group} we shall understand a metrizable topological group. It is known that for any metrizable group $G$ there exists a completely-metrizable group $\bar G$ containing $G$ as a dense subgroup. The group $\bar G$ is unique up to isomorphism and is called {\em the Raikov completion} of $G$. The Raikov completion of a separable metrizable group is a Polish group. For two subsets $A,B$ of a group $G$ by $A\cdot B$ or just $AB$ we denote their product $\{ab:a\in A,\;b\in B\}$ in $G$.

Let $G$ be a topological group and $\bar G$ be its Raikov completion. Let $\mathcal D$ be a family of subsets of $G$. A closed subset $A\subset G$ is called {\em a $\dot Z_{\mathcal D}$-set} in $X$ if there exists set $D\in\mathcal D$ such that the set $D\cdot \bar A$ has empty interior in $\bar G$, where $\bar A$ denotes the closure of $A$ in $\bar G$. By $\sigma\dot\Z_{\mathcal D}(G)$ we denote the $\sigma$-ideal generated by $\dot Z_{\mathcal D}$-sets in $G$.

\begin{proposition}\label{p:D=>n} Let $\mathcal D$ be a family of $n$-dense subsets of a topological group $G$. Then each $\dot Z_{\mathcal D}$-set $A$ in $G$ is a $Z_n$-set in $G$ and hence $\sigma\dot Z_\DD(G)\subset \sigma\Z_n(G)$.
\end{proposition}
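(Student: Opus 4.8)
The plan is to unwind the two definitions involved and reduce the required $n$-density of $G\setminus A$ to choosing a single good translation vector. I would treat $n\in\IN$ (the case $n=\w$ reduces to these, since a $Z_\w$-set is precisely a set that is a $Z_n$-set for every finite $n$, and an $\w$-dense set is one that is $n$-dense for every finite $n$). Since $A$ is a $\dot Z_\DD$-set it is closed in $G$ and there is an $n$-dense set $D\in\DD$ with $D\cdot\bar A$ having empty interior in $\bar G$; note also that $A=\bar A\cap G$ because $A$ is closed in $G$. As $A$ is already closed in $G$, proving that $A$ is a $Z_n$-set amounts to showing that $C(\II^n,G\setminus A)$ is dense in $C(\II^n,G)$.

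The first move is a reduction. Inversion is a homeomorphism of $G$, so it induces a homeomorphism of $C(\II^n,G)$ taking $C(\II^n,D)$ onto $C(\II^n,D^{-1})$; hence $D^{-1}$ is again $n$-dense and $C(\II^n,D^{-1})$ is dense in $C(\II^n,G)$. It therefore suffices to approximate maps from this dense set: given $f\in C(\II^n,D^{-1})$ and a neighborhood $U$ of the unit $e$ in $G$, I will produce $g\in C(\II^n,G\setminus A)$ with $f(x)^{-1}g(x)\in U$ for every $x\in\II^n$ (sets of this form constitute a neighborhood base at $f$ for the compact-open topology, since $\II^n$ is compact).

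The heart of the argument is the choice of $g$. Set $K:=f(\II^n)^{-1}$, a compact subset of $G$ which is contained in $D$ precisely because $f(\II^n)\subseteq D^{-1}$. Then $K\bar A\subseteq D\bar A$ has empty interior in $\bar G$, and $K\bar A$ is closed in $\bar G$ since $K$ is compact and $\bar A$ is closed in the topological group $\bar G$; hence $\bar G\setminus K\bar A$ is open and dense in $\bar G$. Fix an open $\tilde U\subseteq\bar G$ with $e\in\tilde U$ and $\tilde U\cap G\subseteq U$; the open set $\tilde U\setminus K\bar A$ is nonempty and so meets the dense subgroup $G$, giving a point $b\in(\tilde U\setminus K\bar A)\cap G\subseteq U$. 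Put $g(x):=f(x)b$. This $g$ is continuous with values in $G$ and satisfies $f(x)^{-1}g(x)=b\in U$, so it lies in the prescribed neighborhood of $f$; and $g(x)\notin\bar A$ for all $x$, because $f(x)b\in\bar A$ would force $b\in f(x)^{-1}\bar A\subseteq f(\II^n)^{-1}\bar A=K\bar A$. Hence $g(\II^n)\subseteq G\setminus\bar A\subseteq G\setminus A$, completing the proof that $G\setminus A$ is $n$-dense; the inclusion $\sigma\dot Z_\DD(G)\subseteq\sigma\Z_n(G)$ is then immediate, since every $\dot Z_\DD$-set is a $Z_n$-set and so the $\sigma$-ideal generated by the former sits inside the one generated by the latter.

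I expect the two mildly non-obvious points to be exactly these: (i) reducing to maps $f$ with range in $D^{-1}$, which is what makes the hypothesis phrased in terms of $D\cdot\bar A$ (rather than some other product) do the work; and (ii) observing that $K\bar A$ is closed, since mere emptiness of its interior in $\bar G$ would not let us locate the translation vector $b$ inside the dense subgroup $G$ — in particular, no Baire-category argument about $\bar G$ is needed.
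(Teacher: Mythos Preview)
Your proof is correct and follows essentially the same route as the paper: pass to maps with range in $D^{-1}$, then right-translate by a suitable element to miss $\bar A$. Your version is slightly more streamlined in that you use the closedness of $K\bar A$ (for the compact set $K=f(\II^n)^{-1}$) to pick the translation element directly in $G$, whereas the paper first locates it in $\bar G$ and then perturbs it into $G$ via a separate compactness step.
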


\begin{proof} Assume that $A$ is a $\dot Z_\DD$-set in $X$. Given a continuous map $f:\II^n\to G$ and a neighborhood $U_0\subset G$ of the unit $1_G$, we need to find a continuous map $f':\II^n\to G\setminus A$ such that $f'(z)\in f(z)\cdot U_0$ for all $z\in \II^n$. Let $\bar G$ be the Raikov completion of the topological group $G$ and $\bar A$ be the closure of $A$ in $\bar G$.

Find an open neighborhood $\tilde U_0\subset\bar G$ of the unit $1_G$ such that $\tilde U_0\cap G=U_0$ and choose a neighborhood $\tilde U_1\subset \bar X$ of $1_G$ such that $\tilde U_1\tilde U_1\tilde U_1\subset \tilde U_0$. Since $A$ is a $\dot Z_\DD$-set in $G$, there exists a set $D\in\DD$ such that the set $D\cdot \bar A$ has empty interior in $\bar G$. The $n$-density of the set $D$ in $G$ implies the $n$-density of its inverse $D^{-1}=\{x^{-1}:x\in D\}$. Then there exists a continuous map $f_1:\II^n\to D^{-1}$ such that $f_1(z)\in f(z)\cdot \tilde U_1$ for all $z\in\II^n$.

Since the set $D\cdot\bar A$ has empty interior in $\bar G$, there is a point $u\in \tilde U_1\setminus D\cdot \bar A$. For this point we get $(D^{-1}\cdot u)\cap \bar A=\emptyset$. Consider the map $f_2:\II^n\to \bar G$, $f_2:z\mapsto f_1(z)u$, and observe that $f_2(\II^n)\cap\bar A_n\subset (D^{-1}\cdot u)\cap\bar A_n=\emptyset$. Since the set $f_2(\II^n)$ is compact, there is a neighborhood $\tilde U_2\subset \tilde U_1$ of the unit $1_G$ such that $(f_2(\II^n)\cdot \tilde U_2)\cap\bar A=\emptyset$. Using the density of $G$ in $\bar G$, choose a point $w\in G\cap (\tilde U_2\cdot u)$. Then the map $f_3:\II^n\to\bar G$ defined by $f_3(z)=f_2(z)\cdot u^{-1}w=f_1(z)\cdot uu^{-1}w=f_1(z)\cdot w\in G$ for $z\in\II^n$ has the properties: $f_3(\II^n)\subset G\setminus\bar A=G\setminus A$ and for every $z\in\II^n$
$$f_3(z)=f_1(z)uu^{-1}w\in f_1(z)\tilde U_1\tilde U_2\subset f(z)\tilde U_1\tilde U_1\tilde U_2\subset f(z)\tilde U_0,$$which implies $f(z)^{-1}f_3(z)\in G\cap\tilde U_0=U_0$ and finally $f_3(z)\in f(z)U_0$. The map $f_3:\II^n\to G\setminus A$ witnesses that $A$ is a $Z_n$-set in $G$.
\end{proof}

For a topological group $G$ by $\DD_n(G)$ we shall denote the family of all $n$-dense subsets in $G$. To simplify notation, $\dot Z_{\DD_n(G)}$-sets will be called {\em $\dot Z_n$-sets} in $G$. Also we shall denote the $\sigma$-ideal $\sigma \dot\Z_{\DD_n(G)}(G)$ by $\sigma \dot Z_n(G)$. This $\sigma$-ideal is generated by all $\dot Z_n$-sets in $G$. It consists of subsets that can be covered by countably many $\dot Z_n$-sets in $G$. Proposition~\ref{p:D=>n} implies that
$$\sigma\dot \Z_n(G)\subset \sigma\Z_n(G)$$ for any topological group $G$.
$\dot Z_n$-Sets in separable metrizable groups admit the following convenient characterization.

\begin{proposition}\label{p:dotZn} A closed subset $A$ of a separable metrizable group $G$ is a $\dot Z_n$-set in $G$ for some $n\le \w$ if and only if there exists a $\sigma$-compact $n$-dense subset $D\subset G$ such that for every compact set $K\subset D$ the set $K\cdot D$ is nowhere dense in $G$.
\end{proposition}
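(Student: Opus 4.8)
The plan is to establish, for each fixed $n\le\w$, that $A$ is a $\dot Z_n$-set in $G$ if and only if there is a $\sigma$-compact $n$-dense $D\subseteq G$ with the stated property; the proposition follows at once. Write $\bar A$ for the closure of $A$ in the Raikov completion $\bar G$, a Polish (hence Baire) group since $G$ is separable metrizable. I will use freely that in a topological group the product of a compact set with a closed set is closed, and that in a Baire space a closed set with empty interior is nowhere dense. Note also that for compact $K\subseteq G$ one has $K\bar A\cap G=K\cdot A$ (as $A$ is closed in $G$), so, $G$ being dense in $\bar G$, the set $K\bar A$ is nowhere dense in $\bar G$ if and only if $K\cdot A$ is nowhere dense in $G$; thus the nowhere-density condition may be read in $\bar G$ or in $G$ without change.

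First the easy implication. Suppose a $\sigma$-compact $n$-dense $D=\bigcup_{m\in\w}K_m$ with all $K_m$ compact is given, each $K_m\bar A$ being nowhere dense in $\bar G$. Each $K_m\bar A$ is closed, so $D\bar A=\bigcup_m K_m\bar A$ is meager in the Baire space $\bar G$ and therefore has empty interior there. Since $D$ is $n$-dense in $G$, this says exactly that $A$ is a $\dot Z_n$-set in $G$.

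Now the converse, which is the substantive half. Assume $A$ is a $\dot Z_n$-set, so some $n$-dense $D_0\subseteq G$ has $D_0\bar A$ of empty interior in $\bar G$. The key step is to shrink $D_0$ to a $\sigma$-compact $n$-dense subset of itself. For $m\in\IN$, the $n$-density of $D_0$ means precisely that $C(\II^m,D_0)$ is dense in $C(\II^m,G)$; since $C(\II^m,G)$ is separable metrizable, so is its subspace $C(\II^m,D_0)$, and a countable dense subset $\{f^m_k:k\in\w\}$ of $C(\II^m,D_0)$ is then dense in $C(\II^m,G)$. Put $D:=\bigcup_{m,k}f^m_k(\II^m)$, where $m=n$ for finite $n$ and $m$ ranges over $\IN$ when $n=\w$. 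Then $D$ is $\sigma$-compact, $D\subseteq D_0$, and for each relevant $m$ the maps $f^m_k$ lie in $C(\II^m,D)$ and are dense in $C(\II^m,G)$, so $D$ is $n$-dense. From $D\subseteq D_0$ we get $D\bar A\subseteq D_0\bar A$, so $D\bar A$ has empty interior in $\bar G$; and then for an arbitrary compact $K\subseteq D$ the set $K\bar A$ is closed in $\bar G$ and contained in $D\bar A$, hence has empty interior, hence is nowhere dense in $\bar G$ (equivalently, $K\cdot A$ is nowhere dense in $G$). So this $D$ witnesses the right-hand side.

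I expect the shrinking step to be the only real difficulty: one must recognize $n$-density as a density statement in the function space $C(\II^m,G)$ and exploit its separability to produce a $\sigma$-compact $n$-dense set inside a prescribed one. One small point deserves care: the characterization quantifies over all compact $K\subseteq D$, not merely over the pieces of one $\sigma$-compact exhaustion of $D$ (a $\sigma$-compact space need not be hemicompact); but this is harmless, because once $D\bar A$ has empty interior the inclusion $K\bar A\subseteq D\bar A$ together with closedness of $K\bar A$ makes nowhere-density automatic. The easy direction is nothing more than the Baire category theorem applied in $\bar G$.
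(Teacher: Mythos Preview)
Your proof is correct and follows essentially the same approach as the paper's: both directions match (Baire category in $\bar G$ for the ``if'' part; shrinking the $n$-dense witness to a $\sigma$-compact one via separability of the function space $C(\II^n,G)$, then using closedness of $K\bar A$, for the ``only if'' part). The only cosmetic difference is that for $n=\omega$ you range over all finite $m$ rather than working directly with $C(\II^\omega,D')$ as the paper does; this is harmless since $m$-dense for all $m\in\IN$ implies $\omega$-dense by a standard uniform-continuity argument on the Hilbert cube.
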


\begin{proof} Since $G$ is separable and metrizable, the Raikov completion $\bar G$ of $G$ is a Polish group. To prove the ``if'' part, assume that there exists a $\sigma$-compact $n$-dense subset $D\subset G$ such that for every compact set $K\subset D$ the set $K\cdot A$ is nowhere dense in $G$. Then the set $K\cdot\bar A\subset \overline{K\cdot A}$ is nowhere dense in $\bar G$ and the set $D\cdot\bar A$ is meager in $\bar G$. Since $\bar G$ is Polish, the set $D\cdot\bar A$ has empty interior in $\bar G$ and hence $A$ is a $\dot Z_n$-set in $G$.

To prove the ``only if'' part, assume that $A$ is a $\dot Z_n$-set and find an $n$-dense subset $D'\subset G$ such that the set $D'\cdot\bar A$ has empty interior in $\bar G$. The the function space $C(\II^n,D')$ is dense in $C(\II^n,G)$. Since the function space $C(\II^n,D')$ is metrizable and separable, we can find a countable dense subset $\{f_k\}_{k\in\w}$ in $C(\II^n,D')$. Then $D=\bigcup_{k\in\w}f_k(\II^n)$ is a $\sigma$-compact $n$-dense subset in $G$. It remains to show that for each compact set $K\subset D$ the set $K\cdot \bar A$ is nowhere dense in $\bar G$. Consider the multiplication map $\mu:K\times\bar A\to \bar G$, $\mu:(x,y)\mapsto xy$, and observe that for any compact subset $C\subset \bar G$ the preimage $\mu^{-1}(C)=\{(x,y)\in K\times\bar A:xy\in C\}\subset K\times (K^{-1}C)$ is compact. By \cite[3.7.18]{En}, the map $\mu$ is closed, which implies that the set $K\bar A=\mu(K\times\bar A)$ is closed in $\bar G$. Since the set $D\times \bar A$ has empty interior in $\bar G$, the closed subset $K\bar A\subset D\bar A$ is nowhere dense in $\bar G$. Then its subset $KA$ is nowhere dense in $G$.
\end{proof}

Let $\DD$ be a family of subsets of a topological group $G$. A subset $T\subset G$ is called {\em $\DD$-thick} if for every non-empty open set $U\subset T$ there exist a set $D\in\DD$ and a countable set $C\subset G$ such that  $D\subset C\cdot \bar U$. A set $T\subset G$ is called {\em $n$-thick} in $G$ if it is $\DD_n(G)$-thick. The latter means that for every non-empty open set $U\subset T$  there is a countable set $C\subset G$ such that the set $C\bar U$ in $n$-dense in $G$.

\begin{theorem}\label{t1} Let $\DD$ be a family of subsets in a separable metrizable group $G$. If an analytic subset $A$ of $G$ does not belong to the $\sigma$-ideal $\sigma\dot Z_\DD(X)$, then for any $\DD$-thick subset $T\subset G$ and any dense Polish subspace $P\subset T$ the set $PA$ is not meager in $G$, the set $PAPA$ has non-empty interior in the Raikov completion $\bar G$ of $G$, and the set $PAA^{-1}P^{-1}$ is a neighborhood of unit in $\bar G$.
\end{theorem}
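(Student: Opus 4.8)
The plan is to derive all three assertions from the Piccard--Pettis theorem applied inside the Polish group $\bar G$, after first proving the (formally stronger) statement that $PA$ is non-meager in $\bar G$; this is the core of the argument and will be obtained by a fusion along a Souslin representation of $A$. To begin, I would set up a reduction. Fix a countable base $\mathcal B$ of $\bar G$ and write $A=\pr(C)$ for a closed set $C\subset\bar G\times\w^\w$, where $\pr$ is the coordinate projection onto $\bar G$. Running the standard Cantor--Bendixson-type transfinite derivative on $C$ --- at each step deleting the points of $C$ lying over a basic box $O\times[s]$ ($O\in\mathcal B$, $s\in\w^{<\w}$) whose trace $\pr((O\times[s])\cap C)$ belongs to the $\sigma$-ideal $\sigma\dot Z_\DD(G)$ --- the process stabilises at a countable ordinal (as $\bar G\times\w^\w$ is second countable) and discards only a set lying in $\sigma\dot Z_\DD(G)$ (there are countably many basic boxes and $\sigma\dot Z_\DD(G)$ is a $\sigma$-ideal). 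Replacing $A$ by the projection of the stabilised set --- again analytic, still not in $\sigma\dot Z_\DD(G)$, and contained in the original $A$ --- and noting that all three conclusions only become easier as $A$ shrinks, we may assume that for every basic box $O\times[s]$ meeting $C$ the trace $\pr((O\times[s])\cap C)$ is not in $\sigma\dot Z_\DD(G)$; equivalently, the closure in $G$ of any such trace is not a $\dot Z_\DD$-set (were it one, the trace would be a single $\dot Z_\DD$-set, hence in $\sigma\dot Z_\DD(G)$).

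Next I would prove a ``spreading'' lemma: if $X\subset G$ and the closure of $X$ in $G$ is not a $\dot Z_\DD$-set, then $(U\cap P)X$ is somewhere dense in $\bar G$ for every non-empty open $U\subset T$. Indeed, the density of $P$ in $T$ and the $\DD$-thickness of $T$ provide $D\in\DD$ and a countable $C_0\subset G$ with $D\subset C_0\cdot\overline{U\cap P}$, the closure taken in $G$; since the closure of $X$ in $G$ is not a $\dot Z_\DD$-set, the set $D\cdot\overline{X}$ has non-empty interior in $\bar G$, where $\overline{X}$ now denotes the closure of $X$ in $\bar G$; hence $C_0\cdot\overline{U\cap P}\cdot\overline{X}$ has non-empty interior in $\bar G$. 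As $\overline{U\cap P}\cdot\overline{X}$ is contained in the closure of $(U\cap P)X$ in $\bar G$ and $C_0$ is countable, the Baire Theorem in $\bar G$ forces $(U\cap P)X$ to be somewhere dense in $\bar G$.

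Now the fusion. Suppose, for contradiction, that $PA$ is meager in $G$; then $PA$ is covered by countably many closed nowhere dense subsets $N_n$ of $\bar G$ (take $\bar G$-closures of nowhere dense pieces of $PA$ in $G$, which remain nowhere dense in $\bar G$). Fixing complete metrics on the Polish space $P$ and on $\bar G$, I would recursively build decreasing sequences of basic open sets $W_k\subset P$ and $O_k\subset\bar G$ with diameters tending to $0$, together with an increasing sequence $s_k\in\w^{<\w}$, maintaining $(O_k\times[s_k])\cap C\neq\emptyset$ and $\overline{W_k}\cdot\overline{O_k}\cap N_k=\emptyset$. At step $k$ the trace $A_k:=\pr((O_k\times[s_k])\cap C)$ is non-empty, hence by the reduction not in $\sigma\dot Z_\DD(G)$, hence by the spreading lemma $W_kA_k$ is somewhere dense in $\bar G$ and so not contained in the nowhere dense set $N_{k+1}$; choosing $q\in W_k$ and $y\in A_k$ with $qy\notin N_{k+1}$, a branch $\gamma$ over $y$ in $C$, and then shrinking $W_k$ about $q$, $O_k$ about $y$, and extending $s_k$ along $\gamma$, one gets $W_{k+1},O_{k+1},s_{k+1}$ with the invariants preserved and $A_{k+1}$ still non-empty. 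In the limit $\bigcap_k\overline{W_k}=\{p^{*}\}\subset P$ and $\bigcap_k\overline{O_k}=\{x^{*}\}$, while $(x^{*},\bigcup_k s_k)\in C$ because $C$ is closed, so $x^{*}\in A$; but $p^{*}x^{*}\in\overline{W_k}\cdot\overline{O_k}$ misses $N_k$ for every $k$, so $p^{*}x^{*}\in PA\setminus\bigcup_n N_n$ --- a contradiction. Hence $PA$ is non-meager in $\bar G$, and therefore also in $G$.

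Finally, $PA=\tilde m(P\times C)$, where $\tilde m(p,(x,\gamma)):=px$, is a metrizable continuous image of the Polish space $P\times C$, hence an analytic subset of $\bar G$, and by the previous step it is non-meager in the Polish group $\bar G$. Applying the Piccard--Pettis theorem (Theorem~\ref{t:pp}) to the pair $(PA,PA)$ shows that $PAPA=PA\cdot PA$ has non-empty interior in $\bar G$ and that $PAA^{-1}P^{-1}=PA\cdot(PA)^{-1}$ is a neighborhood of the unit in $\bar G$. I expect the main obstacle to be the combination of the reduction (which is what makes ``not in $\sigma\dot Z_\DD(G)$'' available at every node of the scheme for $A$) with the fusion step, which must follow a branch of the Souslin representation of $A$ while simultaneously converging in $P$, converging in $\bar G$, and dodging the sets $N_k$; by contrast the spreading lemma and the closing appeal to Piccard--Pettis are routine.
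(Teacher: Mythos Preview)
Your argument is correct, but it takes a different route from the paper. The paper invokes Solecki's dichotomy \cite{Sol} as a black box: since the $\sigma$-ideal $\I$ on $\bar G$ generated by closures of $\dot Z_\DD$-sets is generated by closed sets, the analytic set $A\notin\I$ contains a Polish subspace $B\notin\I$, which (after a Cantor--Bendixson refinement) may be assumed to have every non-empty relatively open subset outside $\I$. Then $P\times B$ is Polish, and if $PB$ were meager one applies Baire directly to the closed sets $M_k=\{(x,y)\in P\times B:xy\in N_k\}$ to obtain non-empty open $V\subset P$, $U\subset B$ with $\bar V\bar U\subset N_k$; the $\DD$-thickness of $T$ and the fact that $\bar U\cap G$ is not a $\dot Z_\DD$-set then give the contradiction exactly as in your ``spreading lemma''. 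The final appeal to Piccard--Pettis is the same.

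What you do instead is re-derive, by a transfinite derivation on a Souslin representation of $A$, the property that every non-empty basic trace lies outside $\sigma\dot Z_\DD(G)$, and then run a fusion along the scheme (shrinking simultaneously in $P$, in $\bar G$, and along the tree) to produce a concrete point of $PA$ avoiding all the $N_k$. This is essentially the proof of the relevant instance of Solecki's theorem, inlined into the argument. Your approach is more self-contained and avoids quoting a nontrivial descriptive-set-theoretic dichotomy; the paper's approach is shorter and more modular, since once $B$ is Polish the Baire step on $P\times B$ replaces your entire fusion. Both routes hinge on the same spreading observation (that if the closure of $X$ in $G$ is not a $\dot Z_\DD$-set and $U$ is non-empty open in $T$, then $(U\cap P)X$ is somewhere dense in $\bar G$).
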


\begin{proof} Assume that $A\notin\sigma\dot\Z_\DD(G)$ and $T$ is an $\DD$-thick set in $G$.
 On the Polish group $\bar G$ consider the $\sigma$-ideal $\I$ generated by the family $\{\bar A:A\in\sigma\dot Z_\DD(G)\}$ of closed subsets of the Polish group $\bar G$. It follows from $A\notin\sigma\dot Z_\DD(G)$ that $A\notin\I$. By the Solecki dichotomy \cite{Sol}, the analytic set $A\notin\I$ contains a Polish subspace $B\notin\I$. Replacing $B$ by a smaller closed subset of $B$, we can assume that each non-empty open subspace $U\subset B$ does not belong to the ideal $\I$.

Given a dense Polish subspace $P\subset T$, we shall show that the set $PB$ is not meager in $G$. To derive a contradiction, assume that $PB$ is meager in $G$ and find closed nowhere dense subsets $N_k\subset \bar G$, $k\in\w$, such that $PB\subset \bigcup_{k\in\w}N_k$. By the continuity of the multiplication in $G$, for every $k\in\w$ the set $M_k=\{(x,y)\in P\times B:xy\in N_k\}$ is closed in the Polish space $P\times B$. Since $P\times B\subset\bigcup_{k\in\w}M_k$, we can apply the Baire Theorem and find two non-empty open sets $V\subset P$ and $U\subset B$ such that $V\times U\subset M_k$ for some $k\in\w$. It follows that the set $\bar V\times \bar U\subset N_k$ is nowhere dense in $\bar G$. Here $\bar V$ is the closure of $V$ in $G$ and $\bar U$ is the closures of $U$ in $\bar G$.

Since the set $T$ is $\DD$-thick in $G$, and the set $\bar V\cap T$ has non-empty interior in $T$, for some countable set $S\subset G$ the set $S\cdot \bar V$ contains a set $D\in\DD$. %Since the function space $C(\II^n,SV)$ is separable, the set $S\cdot V$ contains a $\sigma$-compact subset $\Sigma$, which is $n$-dense in $SV$ and $G$.

By the choice of $P$, the non-empty open set $U\subset P$ does not belong to the ideal $\I$ and hence  $\bar U\cap G$ is not a $\dot Z_\DD$-set in $G$. Then for the set $D\in\DD$ the set $D\bar U$ has non-empty interior in $\bar G$ and hence is not meager in $\bar G$. On the other hand, the set $D\bar U\subset S\bar V\bar U\subset S\cdot N_k$ is meager in $\bar G$ being the union of countably many translations of the nowhere dense set $N_k$.  This contradiction shows that the set $PB$ is not meager in $G$ and consequently the analytic set $PA\supset PB$ is not meager in the Polish group $\bar G$. By the Piccard-Pettis Theorem~\ref{t:pp}, the set $PAPA$ has non-empty interior in $\bar G$ and the set $PA(PA)^{-1}$ is a neighborhood of the unit in $\bar G$.
\end{proof}

A topological space $X$ is called {\em densely-Polish} if $A$ contains a dense Polish subspace. It is known that an analytic space $A$ is densely-Polish if and only if $A$ is Baire.

\begin{corollary}\label{c:main} Let $\DD$ be a family of subsets of a separable metrizable group $G$. If analytic subsets $A,B$ of $G$ do not belong to the ideal $\sigma\dot Z_\DD(G)$, then for any densely-Polish $\DD$-thick sets $E,F$ in $X$ the sets $EA$, $FB$ are not meager in $G$ and the sets $EAFB$ and $EAB^{-1}F^{-1}$ have non-empty interior in the Raikov completion $\bar G$ of $G$.
\end{corollary}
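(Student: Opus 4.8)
The strategy is to derive the corollary from Theorem~\ref{t1}, applied separately to $A$ and to $B$, followed by two applications of the Piccard--Pettis Theorem~\ref{t:pp} inside the Polish group $\bar G$.

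First I would use that $E$ and $F$ are densely-Polish to fix dense Polish subspaces $P\subset E$ and $Q\subset F$. Since $E,F$ are $\DD$-thick and $A,B\notin\sigma\dot Z_\DD(G)$, Theorem~\ref{t1} — applied once with $T:=E$, the subspace $P$ and the set $A$, and once with $T:=F$, the subspace $Q$ and the set $B$ — guarantees that $PA$ and $QB$ are not meager in $G$. As $EA\supset PA$ and $FB\supset QB$, this already yields that $EA$ and $FB$ are not meager in $G$, which is the first assertion.

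Next I would pass to the Raikov completion $\bar G$, which is a Polish group because $G$ is separable and metrizable. Two elementary observations are needed here. First, a subset of $G$ which is non-meager in $G$ is also non-meager in $\bar G$: if it were contained in a countable union of closed nowhere dense subsets of $\bar G$, then intersecting these with the dense subgroup $G$ would exhibit it as meager in $G$. Second, $PA$ and $QB$ are analytic subsets of $\bar G$: the product $P\times A$ is analytic, and $PA$ is its image under the continuous multiplication map $\bar G\times\bar G\to\bar G$, hence analytic (and metrizable, being a subspace of $\bar G$). Consequently $PA$ and $QB$, and likewise $(QB)^{-1}=B^{-1}Q^{-1}$ (since inversion is a self-homeomorphism of $\bar G$), are analytic non-meager subsets of the Polish group $\bar G$.

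Finally I would invoke Theorem~\ref{t:pp} twice: applied to the pair $PA,\ QB$ it gives that $PAQB=(PA)(QB)$ has non-empty interior in $\bar G$, and applied to the pair $PA,\ B^{-1}Q^{-1}$ it gives that $PAB^{-1}Q^{-1}=(PA)(B^{-1}Q^{-1})$ has non-empty interior in $\bar G$. Since $PAQB\subset EAFB$ and $PAB^{-1}Q^{-1}\subset EAB^{-1}F^{-1}$, both $EAFB$ and $EAB^{-1}F^{-1}$ have non-empty interior in $\bar G$, completing the argument. Beyond quoting Theorems~\ref{t1} and~\ref{t:pp}, the only points requiring care are the transfer of non-meagerness from $G$ to $\bar G$ and the analyticity of the products $PA$ and $QB$; I do not anticipate any substantial obstacle.
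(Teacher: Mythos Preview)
Your proof is correct and follows essentially the same route as the paper: pick dense Polish subspaces of $E$ and $F$, apply Theorem~\ref{t1} to each, and then invoke Piccard--Pettis in $\bar G$. The only difference is that you spell out the passage from ``non-meager in $G$'' to ``non-meager analytic in $\bar G$'', whereas the paper absorbs this into the statement (and proof) of Theorem~\ref{t1} and simply quotes Piccard--Pettis directly.
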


\begin{proof} Let $E_*\subset E$ and $F_*\subset F$ be dense Polish subspaces of the densely-Polish spaces $E$ and $F$, respectively. By Theorem~\ref{t1}, the analytic sets $E_*A$ and $F_*B$ are not meager in the Polish space $\bar G$. By the Piccard-Pettis Theorem~\ref{t:pp}, the sets $E_* AF_* B\subset EAFB$ and $E_*AB^{-1}F_*^{-1}\subset EAB^{-1}F^{-1}$ have non-empty interior in the Polish group $\bar G$.
\end{proof}

Corollary~\ref{c:main} implies the next three corollaries.

\begin{corollary}\label{c:subgroup-D} Let $\DD$ be a family of subsets in a separable metrizable group $G$ and $A$ be an analytic subgroup in $G$. If $A\notin\sigma\dot Z_\DD(X)$, then for any densely-Polish $\DD$-thick subsets $E,F\subset G$ the set $EAF^{-1}$ have non-empty interior in the completion $\bar G$ of $G$.
\end{corollary}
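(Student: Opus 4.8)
The plan is to deduce the statement directly from Corollary~\ref{c:main} by taking $B:=A$. The subgroup $A$ is analytic and, by hypothesis, does not belong to the ideal $\sigma\dot Z_\DD(G)$; hence the pair $(A,A)$ together with the densely-Polish $\DD$-thick sets $E$ and $F$ satisfies all the hypotheses of Corollary~\ref{c:main}. Applying that corollary gives that the set $EAA^{-1}F^{-1}$ has non-empty interior in the Raikov completion $\bar G$ of $G$.

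It then remains to perform the algebraic simplification $EAA^{-1}F^{-1}=EAF^{-1}$, and this is the only point at which the subgroup hypothesis is actually used. Since $A$ is a subgroup, $A^{-1}=A$; and since $1_G\in A$ we have $A\subseteq AA\subseteq A$, so $AA=A$. Consequently $EAA^{-1}F^{-1}=EAAF^{-1}=EAF^{-1}$, and this set has non-empty interior in $\bar G$, as required.

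I do not anticipate any genuine obstacle here: the substantive work has already been carried out in Theorem~\ref{t1} and Corollary~\ref{c:main}, and the subgroup assumption merely collapses the four-fold product $EAA^{-1}F^{-1}$ to the three-fold product $EAF^{-1}$. (One could alternatively apply Theorem~\ref{t1} to a single dense Polish subspace of $E$, but that produces information about $PAPA$ and $PAA^{-1}P^{-1}$ rather than about $EAF^{-1}$, so routing through the two-sided Corollary~\ref{c:main} is the natural choice.)
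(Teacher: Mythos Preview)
Your proof is correct and follows exactly the route the paper intends: the paper simply notes that Corollary~\ref{c:main} implies this statement, and your application of that corollary with $B=A$ together with the subgroup identities $A^{-1}=A$ and $AA=A$ is precisely the intended argument.
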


\begin{corollary}\label{c:non-complete-D} Let $\DD$ be a family of subsets of a separable metrizable group $G$. If $G$ is not Polish and $G$ contains a densely-Polish $\DD$-thick subset $P$, then each analytic subset $A$ of $X$ belongs to the $\sigma$-ideal $\sigma \dot Z_\DD(X)$.
\end{corollary}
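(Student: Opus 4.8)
The plan is to derive the statement from Corollary~\ref{c:main} by contradiction. Suppose, to the contrary, that some analytic subset $A\subset G$ does not belong to the $\sigma$-ideal $\sigma\dot Z_\DD(G)$. By hypothesis, $G$ contains a densely-Polish $\DD$-thick subset $P$. Applying Corollary~\ref{c:main} with $B:=A$ and with $E:=F:=P$ (a densely-Polish $\DD$-thick subset of $G$), I would conclude that the set $PAA^{-1}P^{-1}$ has non-empty interior in the Raikov completion $\bar G$ of $G$. Only this last conclusion is needed; the non-meagerness of $PA$ and the information about $PAPA$ are irrelevant here. Equivalently, one could invoke Theorem~\ref{t1} directly with $T:=P$ and a dense Polish subspace $Q\subset P$, obtaining that $QAA^{-1}Q^{-1}$ is a neighborhood of $1_G$ in $\bar G$.

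Next comes the only genuine step of the argument, which is purely algebraic. Since $P\subset G$ and $A\subset G$, and $G$ is a subgroup of $\bar G$, the product $PAA^{-1}P^{-1}$ is contained in $G$. Hence $G$ contains a non-empty open subset $W$ of $\bar G$; choosing a point $g\in W$, the translate $g^{-1}W\subset G$ is an open neighborhood of the unit $1_G$ contained in the subgroup $G$, so $G$ is open in $\bar G$. An open subgroup of a topological group is also closed, and $G$ is dense in $\bar G$, so $G=\bar G$; in particular $G$ is Polish. This contradicts the assumption that $G$ is not Polish, and the contradiction shows that every analytic subset of $G$ belongs to $\sigma\dot Z_\DD(G)$.

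I do not anticipate any real obstacle: essentially all of the content is already packed into Theorem~\ref{t1} (and Corollary~\ref{c:main}), while what remains is the classical fact that a subgroup of a topological group with non-empty interior is clopen, combined with density. The only points that require a little care are bookkeeping of where the various products live --- making sure that the set produced by Corollary~\ref{c:main} stays inside the subgroup $G$ of $\bar G$ --- and not confusing the $\DD$-thick set $P$ of the present statement with the dense Polish subspace that appears (under the same name $P$) in the statement of Theorem~\ref{t1}.
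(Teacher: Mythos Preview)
Your proposal is correct and follows essentially the same approach as the paper: argue by contradiction, apply Corollary~\ref{c:main} with $E=F=P$ and $B=A$ to obtain a subset of $G$ with non-empty interior in $\bar G$, and conclude $G=\bar G$. The only cosmetic difference is that the paper uses the product $PAPA$ rather than $PAA^{-1}P^{-1}$, but either choice works for exactly the reason you state.
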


\begin{proof} By Corollary~\ref{c:main}, for every analytic set $A\notin\sigma\dot Z_\DD(G)$ of $G$ the set $PAPA\subset G$ has non-empty interior in the Raikov completion $\bar G$ of $G$. Then $G$ also has non-empty interior in $\bar G$ and hence coincide with the Polish group $\bar G$, which is a desired contradiction.
\end{proof}

A subset $A$ of an abelian group $G$ is called {\em additive} if $A+A\subset A$. In particular, each subgroup of $G$ is an additive set. Corollary~\ref{c:main} implies:

\begin{corollary}\label{c:additive-D} Let $\DD$ be a family of subsets in an abelian separable metrizable group $G$ and $A$ be an additive set in $G$. If $A\notin\sigma\dot Z_\DD(X)$, then for any densely-Polish $\DD$-thick subsets $E,F\subset X$ the set $A+E+F$ has non-empty interior in the Raikov completion $\bar G$ of $G$.
\end{corollary}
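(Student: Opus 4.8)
The plan is to deduce this directly from Corollary~\ref{c:main} by applying it with the two analytic sets taken to be the \emph{same} set $A$ (here $A$ is analytic, exactly as in the companion Corollaries~\ref{c:subgroup-D} and \ref{c:non-complete-D}). Setting $B:=A$ and feeding in the given densely-Polish $\DD$-thick sets $E,F$, Corollary~\ref{c:main} yields, in additive notation, that the set $E+A+F+A$ has non-empty interior in the Raikov completion $\bar G$ of $G$. It is worth being explicit about which of the two conclusions of Corollary~\ref{c:main} I would invoke: the one about $EAFB$, i.e. $E+A+F+A$, and not the one about $EAB^{-1}F^{-1}$, i.e. $E+A-A-F$. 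The reason is that the only algebraic property available here is $A+A\subseteq A$, whereas for a merely additive set $A-A$ need not be contained in $A$, so the second conclusion would not help.

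The remaining step is a purely formal manipulation. Since $G$ is abelian I can rearrange $E+A+F+A=(A+A)+E+F$, and since $A$ is additive, $A+A\subseteq A$, so $(A+A)+E+F\subseteq A+E+F$. Thus $A+E+F$ contains the set $E+A+F+A$, which has non-empty interior in $\bar G$; consequently $A+E+F$ itself has non-empty interior in $\bar G$, as claimed.

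I do not expect any genuine obstacle at this stage: all the substance — the passage to a Polish subspace via the Solecki dichotomy, the Baire-category argument and the use of $\DD$-thickness inside Theorem~\ref{t1}, and the Piccard--Pettis theorem — has already been expended in Theorem~\ref{t1} and Corollary~\ref{c:main}. The content of this corollary is merely the observation that additivity, $A+A\subseteq A$, is precisely the hypothesis needed to absorb the superfluous second copy of $A$ appearing in the product $EAFB$, turning a four-fold sum into the three-fold sum $A+E+F$.
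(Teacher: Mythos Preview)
Your proposal is correct and matches the paper's approach exactly: the paper simply states that Corollary~\ref{c:main} implies this result, and your argument---apply Corollary~\ref{c:main} with $B=A$, use commutativity to rewrite $E+A+F+A$ as $(A+A)+E+F$, then absorb via $A+A\subset A$---is precisely the intended deduction. Your observation that the analyticity of $A$ is being tacitly assumed (as in the neighboring corollaries) is also correct; the hypothesis is needed to invoke Corollary~\ref{c:main} and appears to be an omission in the statement.
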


A similar result holds for convex subsets in linear metric spaces.

\begin{corollary}\label{c:convex-D} Let $\DD$ be a family of subsets of a separable linear metric space $X$, and let $A$ be a convex subset of $X$. If $A\notin\sigma\dot Z_\DD(X)$, then for any densely-Polish $\DD$-thick subsets $E,F\subset X$ the set $A+E+F$ has non-empty interior in the completion $\bar X$ of $X$.
\end{corollary}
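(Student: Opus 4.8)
The plan is to deduce the claim from Corollary~\ref{c:main} by using, in place of the inclusion $A+A\subseteq A$ that powers Corollary~\ref{c:additive-D}, the convexity identity $\tfrac12A+\tfrac12A=A$. Throughout I assume $A$ analytic: this is tacit in the statement but indispensable (e.g.\ for a dense hyperplane $A$, taken as the kernel of a discontinuous linear functional, and $E=F=\{0\}$ the conclusion fails while the hypotheses can hold for a suitable $\DD$), and it is needed to invoke Corollary~\ref{c:main} at all.

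So, first put $A_0:=\tfrac12A$. Since $X$ is a linear metric space, $h\colon x\mapsto\tfrac12x$ is a homeomorphism of $X$, hence $A_0=h(A)$ is analytic. Next I would check $A_0\notin\sigma\dot Z_\DD(X)$. The map $h$ extends to a linear homeomorphism $\bar h$ of the completion $\bar X$, and for any closed $\dot Z_\DD$-set $S$ in $X$ one has $\overline{h(S)}=\bar h(\bar S)$ and $h(D)+\overline{h(S)}=\bar h(D+\bar S)$, so $h(S)$ is a $\dot Z_{h(\DD)}$-set with $h(\DD):=\{\tfrac12D:D\in\DD\}$; hence $h$ carries $\sigma\dot Z_\DD(X)$ onto $\sigma\dot Z_{h(\DD)}(X)$, and this coincides with $\sigma\dot Z_\DD(X)$ once $\DD$ is invariant under the homotheties $x\mapsto\tfrac12x$, $x\mapsto2x$ — which is the case for $\DD=\DD_n(X)$ and in all intended applications, since scaling a map $f\colon\II^n\to X$ by $\tfrac12$ is a self-homeomorphism of $C(\II^n,X)$ and thus preserves $n$-density. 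Granting this, $A\notin\sigma\dot Z_\DD(X)$ yields $A_0\notin\sigma\dot Z_\DD(X)$. Finally, applying Corollary~\ref{c:main} to the analytic set $A_0\notin\sigma\dot Z_\DD(X)$ (in both slots $A$ and $B$) and to the given densely-Polish $\DD$-thick sets $E,F$, I obtain that
$$E+A_0+F+A_0=E+F+(\tfrac12A+\tfrac12A)=A+E+F$$
has non-empty interior in $\bar X$, as required.

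The only step that is not bookkeeping is the transition $A\notin\sigma\dot Z_\DD(X)\Rightarrow A_0\notin\sigma\dot Z_\DD(X)$, i.e.\ the homothety-invariance of the ideal $\sigma\dot Z_\DD(X)$; this is where the convex case genuinely parts ways from the additive one. The same invariance can be used on the other side of the problem instead: apply Corollary~\ref{c:main} to $A,A$ and to the dilated sets $2E,2F$ (again densely-Polish and $\DD$-thick), getting that $2E+A+2F+A=2(A+E+F)$ has non-empty interior in $\bar X$, whence so does $A+E+F$. In either formulation the substance is just the identity $\tfrac12A+\tfrac12A=A$ together with a single invocation of Corollary~\ref{c:main}.
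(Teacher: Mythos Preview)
Your argument is essentially the paper's own: pass to $\tfrac12A$, apply Corollary~\ref{c:main} with $A=B=\tfrac12A$, and use $\tfrac12A+\tfrac12A\subset A$. The paper's proof simply asserts ``It follows that the homothetic copy $\frac12A$ of $A$ does not belong to the ideal $\sigma\dot Z_\DD(X)$'' and moves on; you are right to observe that this step amounts to homothety-invariance of the ideal $\sigma\dot\Z_\DD(X)$, which for a completely arbitrary family $\DD$ is not automatic but does hold for $\DD=\DD_n(X)$, $\DD=\{L\}$ with $L$ linear, and every other family used downstream. So your write-up is the same proof with that implicit step made explicit; the alternative route via $2E,2F$ is an equivalent repackaging, resting on the same invariance (now of $\DD$-thickness rather than of $\dot Z_\DD$-sets). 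Your side remark that analyticity is tacitly assumed is also correct: Corollary~\ref{c:main} needs it.
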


\begin{proof} It follows that the homothetic copy $\frac12A=\{\frac12a:a\in A\}$ of $A$ does not belong to the ideal $\sigma\dot Z_\DD(X)$. By Corollary~\ref{c:main}, the set $\frac12A+\frac12A+E+F$ has non-empty interior in $\bar X$. The convexity of $A$ guarantees that $\frac12A+\frac12A\subset A$ and hence the set $A+E+F\supset\frac12A+\frac12A+E+F$ has non-empty interior in $\bar X$, too.
\end{proof}

Applying the above results to the family $\DD_n(G)$ of $n$-dense subsets in a topological group $G$, we get the following corollaries. In these corollaries we use the obvious fact that a topological group $G$ containing an $n$-thick separable subset is separable. By Proposition~\ref{p:D=>n}, $\sigma\dot\Z_n(G):=\sigma\dot\Z_{\DD_n(G)}(G)\subset\sigma\Z_n(G)$. By Proposition~\ref{p:dotZn}, a closed subset $A$ of a separable metrizable group $G$ is a $\dot Z_n$-set in $X$ if and only if there exists a $\sigma$-compact $n$-dense set $D\subset G$ such that for every compact set $K\subset D$ the set $K\cdot A$ is nowhere dense in $G$. 

We recall that a subset $T$ of a topological group $G$ is $n$-thick if and only if for any non-empty open set $U\subset T$ there is a countable subset $A\subset G$ such that the set $A\cdot U$ is $n$-densein $G$. Observe that each non-empty subset of a separable metrizable group is $0$-thick. Because of that the following corollary of Theorem~\ref{t1} can be considered as a generalization of the Piccard-Pettis Theorem~\ref{t:pp}.

\begin{corollary} If for some $n\le\w$ an analytic subset $A$ of a metrizable group $G$ does not belong to the $\sigma$-ideal $\sigma\dot Z_n(X)$, then for any $n$-thick subset $T\subset G$ and any dense Polish subspace $P\subset T$ the set $PA$ is not meager in $G$, the set $PAPA$ has non-empty interior in $\bar G$, and the set $PAA^{-1}P^{-1}$ is a neighborhood of unit in $\bar G$.
\end{corollary}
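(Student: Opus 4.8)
The plan is to obtain this statement as the special case $\DD=\DD_n(G)$ of Theorem~\ref{t1}. The only gap to be bridged is that Theorem~\ref{t1} is stated for \emph{separable} metrizable groups, while here $G$ is a priori only metrizable; so the first step is to check separability. Since the dense subspace $P\subset T$ is Polish, it is separable, and therefore $T$ itself is separable. Now $T$ being $n$-thick in $G$ means that for every non-empty open $U\subset T$ there is a countable $C\subset G$ with $C\bar U$ $n$-dense — hence dense — in $G$; fixing any such $U$ gives a countable subset of $G$ whose translates of $\bar U$ are dense, so $G$ is separable. (This is the ``obvious fact'' recalled just before the corollary.)

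With $G$ now a separable metrizable group, I would simply unwind the definitions: by construction $\sigma\dot Z_n(G)=\sigma\dot\Z_{\DD_n(G)}(G)$, and ``$n$-thick'' is by definition ``$\DD_n(G)$-thick''. Thus the hypothesis $A\notin\sigma\dot Z_n(G)$ is exactly the hypothesis $A\notin\sigma\dot\Z_\DD(G)$ of Theorem~\ref{t1} for $\DD=\DD_n(G)$, and $T$ is a $\DD$-thick subset with dense Polish subspace $P$. Theorem~\ref{t1} then yields verbatim that $PA$ is not meager in $G$, that $PAPA$ has non-empty interior in $\bar G$, and that $PAA^{-1}P^{-1}$ is a neighborhood of the unit in $\bar G$, which is the assertion.

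I do not anticipate any real obstacle: the corollary is a literal instantiation of Theorem~\ref{t1}, and the reduction to the separable case is routine once the definition of $n$-thickness is unpacked. If one wanted to spell things out slightly more, one could alternatively invoke Proposition~\ref{p:dotZn} to rephrase $A\notin\sigma\dot Z_n(G)$ in terms of $\sigma$-compact witnesses, but nothing in the argument needs this.
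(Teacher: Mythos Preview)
Your proposal is correct and matches the paper's own approach exactly: the corollary is obtained by specializing Theorem~\ref{t1} to the family $\DD=\DD_n(G)$, after first observing that the existence of a separable $n$-thick subset (here $T$, which is separable because its dense subspace $P$ is Polish) forces $G$ to be separable. The paper states this reduction in the sentence immediately preceding the block of corollaries and gives no further proof.
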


\begin{corollary}\label{c:main2} If for some $n\le\w$ analytic subsets $A,B$ of a metrizable group $G$ do not belong to the ideal $\sigma\dot Z_n(G)$, then for any densely-Polish $n$-thick sets $E,F$ in $X$ the sets $EA$, $FB$ are not meager in $G$ and the sets $EAFB$ and $EAB^{-1}F^{-1}$ have non-empty interior in the Raikov completion $\bar G$ of $G$.
\end{corollary}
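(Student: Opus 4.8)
The plan is to derive Corollary~\ref{c:main2} as the special case of Corollary~\ref{c:main} corresponding to the family $\DD=\DD_n(G)$ of all $n$-dense subsets of $G$. First I would observe that the hypotheses have been arranged precisely so that this specialization is legitimate: by definition $\sigma\dot Z_n(G)=\sigma\dot\Z_{\DD_n(G)}(G)$, so the assumption $A,B\notin\sigma\dot Z_n(G)$ is literally $A,B\notin\sigma\dot Z_\DD(G)$ for $\DD=\DD_n(G)$; and a subset of $G$ is $n$-thick if and only if it is $\DD_n(G)$-thick, so the densely-Polish $n$-thick sets $E,F$ are densely-Polish $\DD$-thick sets in the sense required by Corollary~\ref{c:main}.

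The one point needing a short argument is that Corollary~\ref{c:main} is stated for a \emph{separable} metrizable group, whereas here $G$ is only assumed to be metrizable. To close this gap I would invoke the observation recorded just before the statement: a topological group containing an $n$-thick separable subset is separable. Since $E$ is densely-Polish, it contains a dense Polish—hence separable—subspace, so $E$ itself is separable; being also $n$-thick, it forces $G$ to be separable (one could equally use $F$). With separability of $G$ in hand, all hypotheses of Corollary~\ref{c:main} for $\DD=\DD_n(G)$ are verified.

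It then remains only to transcribe the conclusion of Corollary~\ref{c:main}: the sets $EA$ and $FB$ are not meager in $G$, and the sets $EAFB$ and $EAB^{-1}F^{-1}$ have non-empty interior in the Raikov completion $\bar G$ of $G$, which is exactly the assertion to be proved. I do not anticipate any real obstacle here beyond bookkeeping; the substantive content has already been expended in Theorem~\ref{t1} (via the Solecki dichotomy and the Piccard-Pettis Theorem~\ref{t:pp}) and in Proposition~\ref{p:D=>n}, which ties $\dot Z_\DD$-sets to $Z_n$-sets. The only step one must not overlook is the passage from ``metrizable'' to ``separable metrizable,'' handled as above via the $n$-thick separable subset $E$.
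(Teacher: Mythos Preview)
Your proposal is correct and matches the paper's approach exactly: the paper states this corollary without proof, indicating only that it follows by applying Corollary~\ref{c:main} to the family $\DD_n(G)$ and invoking the observation that a topological group containing an $n$-thick separable subset is separable. Your write-up spells out precisely these two steps, including the separability argument via the densely-Polish set $E$; the closing remark about Proposition~\ref{p:D=>n} is extraneous (that proposition is not needed here), but this does not affect correctness.
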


\begin{corollary} Let $A$ be an analytic subgroup of a separable metrizable group $G$. If $A\notin\sigma\dot Z_n(X)$ for some $n\in\w$, then for any densely-Polish $n$-thick subsets $E,F\subset G$ the set $EAF^{-1}$ has non-empty interior in the completion $\bar G$ of $G$.
\end{corollary}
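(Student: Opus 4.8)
The plan is to obtain this statement as the specialization of Corollary~\ref{c:subgroup-D} to the family $\DD:=\DD_n(G)$ of $n$-dense subsets of $G$. Indeed, by definition $\sigma\dot Z_n(G)=\sigma\dot Z_{\DD_n(G)}(G)$, and a subset of $G$ is $n$-thick precisely when it is $\DD_n(G)$-thick; hence the hypotheses ``$A\notin\sigma\dot Z_n(G)$'' and ``$E,F$ are densely-Polish $n$-thick'' translate verbatim into the hypotheses of Corollary~\ref{c:subgroup-D}, whose conclusion is exactly that $EAF^{-1}$ has non-empty interior in $\bar G$.

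Should one prefer a self-contained derivation (as Corollary~\ref{c:subgroup-D} itself is only stated as a consequence of Corollary~\ref{c:main}), the route is to apply Corollary~\ref{c:main2} with $B:=A$. Since $A$ is an analytic subgroup with $A\notin\sigma\dot Z_n(G)$, Corollary~\ref{c:main2} gives that for the densely-Polish $n$-thick sets $E,F$ the set $EAA^{-1}F^{-1}$ has non-empty interior in $\bar G$. Now the elementary subgroup identities $A^{-1}=A$ and $A\cdot A=A$ (the latter because $1_G\in A$ forces $A\subset A\cdot A\subset A$) yield $A\cdot A^{-1}=A$, whence
$$EAF^{-1}=E\cdot(A\cdot A^{-1})\cdot F^{-1}=EAA^{-1}F^{-1},$$
so $EAF^{-1}$ has non-empty interior in $\bar G$, as claimed.

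There is no genuine obstacle here: the corollary is a purely formal consequence of the machinery already developed, the only points to keep straight being the correspondence $\DD_n(G)\leftrightarrow n$ between the ``$\DD$-language'' and the ``$n$-language'', and the trivial algebraic simplification of $AA^{-1}$ for a subgroup. Separability of $G$ is assumed outright (and is in any case automatic, since $G$ contains the separable set $E$), so the hypotheses of Corollary~\ref{c:main2} are met without further comment.
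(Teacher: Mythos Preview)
Your proposal is correct and matches the paper's own approach: the paper presents this corollary (without explicit proof) immediately after stating that the subsequent results are obtained by ``applying the above results to the family $\DD_n(G)$ of $n$-dense subsets,'' i.e., exactly the specialization of Corollary~\ref{c:subgroup-D} that you describe. Your additional self-contained derivation via Corollary~\ref{c:main2} with $B=A$ and the subgroup identity $AA^{-1}=A$ is also correct and simply unwinds how Corollary~\ref{c:subgroup-D} is itself deduced from Corollary~\ref{c:main}.
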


\begin{corollary} If for some $n\le\w$ a non-complete metrizable topological group $G$ contains a densely-Polish $n$-thick subset, then each analytic subset of $X$ belongs to the $\sigma$-ideal $\sigma \dot Z_n(X)\subset\sigma Z_n(X)$.
\end{corollary}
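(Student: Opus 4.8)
The plan is to recognize this corollary as the instance $\DD=\DD_n(G)$ of Corollary~\ref{c:non-complete-D}, after a short reduction to the separable case and an appeal to Proposition~\ref{p:D=>n} for the displayed inclusion. (As in the neighbouring corollaries, the symbol $X$ in the statement should be read as $G$.)

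\emph{Step 1 (separability).} Let $P\subset G$ be a densely-Polish $n$-thick subset. Since $P$ contains a dense Polish, hence separable, subspace, $P$ itself is separable. By the elementary fact recorded before the corollary, a topological group containing an $n$-thick separable subset is separable; therefore $G$ is a separable metrizable group, and its Raikov completion $\bar G$ is a Polish group. This places us in the setting in which Theorem~\ref{t1}, Corollary~\ref{c:main}, and the Piccard--Pettis Theorem~\ref{t:pp} apply.

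\emph{Step 2 (the key dichotomy).} Suppose, towards a contradiction, that some analytic set $A\subset G$ does not belong to $\sigma\dot Z_n(G)=\sigma\dot Z_{\DD_n(G)}(G)$. Applying Corollary~\ref{c:main2} (the $\DD=\DD_n(G)$ case of Corollary~\ref{c:main}) with $B=A$ and with the densely-Polish $n$-thick sets $E=F=P$, we conclude that the set $PAPA$ has non-empty interior in $\bar G$. Since $P\subset G$, $A\subset G$, and $G$ is a subgroup of $\bar G$, we have $PAPA\subset G$, so $G$ itself has non-empty interior in $\bar G$.

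\emph{Step 3 (open subgroup is closed).} A subgroup of a topological group with non-empty interior is open, and an open subgroup is automatically closed (its complement is a union of cosets, each of which is open). Being simultaneously dense and closed in $\bar G$, the subgroup $G$ must coincide with $\bar G$, contradicting the assumption that $G$ is not complete. Hence every analytic $A\subset G$ lies in $\sigma\dot Z_n(G)$, and the inclusion $\sigma\dot Z_n(G)\subset\sigma Z_n(G)$ is exactly Proposition~\ref{p:D=>n}. I do not anticipate a genuine obstacle here: essentially all the content is carried by Theorem~\ref{t1}/Corollary~\ref{c:main}, and the only new work is the separability observation of Step~1 and the routine ``open subgroup is closed'' argument of Step~3. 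The single point meriting a moment's care is Step~1 — verifying that the hypotheses really force separability of the ambient group $G$ (and not merely of the subset $P$), which is precisely what allows the earlier results to be quoted verbatim.
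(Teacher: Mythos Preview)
Your proof is correct and follows the paper's own route: this corollary is obtained by specializing Corollary~\ref{c:non-complete-D} to the family $\DD=\DD_n(G)$, and the paper's proof of Corollary~\ref{c:non-complete-D} is precisely your Steps~2--3 (argue by contradiction, apply Corollary~\ref{c:main} to get $PAPA$ with non-empty interior in $\bar G$, hence $G=\bar G$). Your Step~1 on separability is exactly the ``obvious fact'' the paper records just before this block of corollaries, and the inclusion $\sigma\dot\Z_n(G)\subset\sigma\Z_n(G)$ is indeed Proposition~\ref{p:D=>n}.
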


\begin{corollary} Let $A$ be an additive subset of an abelian metrizable topological group $G$. If $A\notin\sigma\dot Z_n(X)$ for some $n\le\w$, then for any densely-Polish $n$-thick subsets $E,F\subset X$ the set $A+E+F$ has non-empty interior in the completion $\bar G$ of $G$.
\end{corollary}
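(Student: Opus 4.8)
The plan is to obtain this corollary as a direct specialization of Corollary~\ref{c:main2}, mirroring the way Corollary~\ref{c:additive-D} was deduced from Corollary~\ref{c:main}. Fix $n\le\w$, the additive (analytic) set $A\notin\sigma\dot Z_n(G)$, and densely-Polish $n$-thick subsets $E,F\subset G$.

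First I would apply Corollary~\ref{c:main2} with $B:=A$: since both $A$ and $B$ are analytic sets lying outside the ideal $\sigma\dot Z_n(G)$, and $E,F$ are densely-Polish $n$-thick, the corollary gives that $EAFB$ has non-empty interior in the Raikov completion $\bar G$. Written in the additive notation of the abelian group $G$, this set is $E+A+F+A$.

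Next I would use commutativity to rewrite $E+A+F+A=(A+A)+(E+F)$ and then invoke the additivity hypothesis $A+A\subset A$ to obtain the inclusion $E+A+F+A\subset A+E+F$. Consequently $A+E+F$ contains a subset of $\bar G$ with non-empty interior, and hence has non-empty interior in $\bar G$, as claimed.

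I do not expect any genuine obstacle at this stage: all the substance --- the Solecki dichotomy step in Theorem~\ref{t1}, the Baire-category argument ruling out that $PB$ is meager, and the application of the Piccard-Pettis Theorem~\ref{t:pp} --- is already packaged inside Corollary~\ref{c:main2}. The only points worth double-checking are that the single hypothesis $A\notin\sigma\dot Z_n(G)$ legitimately covers both roles $A$ and $B$ in Corollary~\ref{c:main2}, and that additivity together with commutativity (and not merely the group axioms) yield the inclusion $(A+A)+(E+F)\subset A+E+F$ needed to pass from $EAFB$ to $A+E+F$.
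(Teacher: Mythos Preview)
Your proposal is correct and follows essentially the same route as the paper: the paper obtains this corollary by specializing Corollary~\ref{c:additive-D} to the family $\DD=\DD_n(G)$, while you equivalently specialize Corollary~\ref{c:main} to $\DD_n(G)$ first (obtaining Corollary~\ref{c:main2}) and then repeat the additivity/commutativity step $E+A+F+A=(A+A)+E+F\subset A+E+F$. The two orderings of ``specialize to $\DD_n$'' and ``use additivity'' are interchangeable, so there is no substantive difference.
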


\begin{corollary} Let $A$ be an convex analytic subset of a linear metric space $X$. If $A\notin\sigma\dot Z_n(X)$ for some $n\le\w$, then for any densely-Polish $n$-thick subsets $E,F\subset X$ the set $A+E+F$ has non-empty interior in the completion $\bar X$ of $X$.
\end{corollary}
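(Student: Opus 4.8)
The plan is to read this off from Corollary~\ref{c:convex-D} applied to the family $\DD=\DD_n(X)$ of all $n$-dense subsets of $X$. First I would unwind the notation: by definition $\sigma\dot Z_n(X)=\sigma\dot\Z_{\DD_n(X)}(X)$, so the hypothesis $A\notin\sigma\dot Z_n(X)$ is literally $A\notin\sigma\dot Z_\DD(X)$ for this $\DD$; and, again by definition, a subset of $X$ is $n$-thick precisely when it is $\DD_n(X)$-thick, so $E,F$ are densely-Polish $\DD$-thick subsets of $X$. Thus the hypotheses of Corollary~\ref{c:convex-D} are met verbatim, with one apparent gap: that corollary is stated for a \emph{separable} linear metric space, whereas here $X$ is only assumed to be a linear metric space.

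To dispose of this point I would argue that $X$ is automatically separable under the stated hypotheses. Indeed, the densely-Polish set $E$ contains, by definition, a dense Polish subspace, which is separable; hence $E$ itself is separable. Since $X$ then contains the $n$-thick separable subset $E$, the fact recalled just before these corollaries --- that a topological group containing an $n$-thick separable subset is separable --- shows that the additive topological group of $X$, and therefore $X$, is separable. So no generality is lost by invoking the separable version.

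With separability in hand, Corollary~\ref{c:convex-D} (applied with $\DD=\DD_n(X)$) applies directly to the convex analytic set $A$ and the densely-Polish $\DD$-thick sets $E,F$, and yields that $A+E+F$ has non-empty interior in the completion $\bar X$ of $X$, as required. I do not expect any genuine obstacle here; the only content is this bookkeeping between the $\DD_n$-notation and the abstract-$\DD$ notation together with the separability reduction. If one prefers a self-contained argument bypassing Corollary~\ref{c:convex-D}, it runs exactly as that corollary's proof: since homothety by $\frac12$ is a homeomorphism of $X$, the set $\frac12 A$ is again convex, analytic, and not in $\sigma\dot Z_n(X)$; Corollary~\ref{c:main2} then gives that $\frac12 A+\frac12 A+E+F$ has non-empty interior in $\bar X$, and the convexity of $A$ yields $\frac12 A+\frac12 A\subset A$, whence $A+E+F\supset\frac12 A+\frac12 A+E+F$ has non-empty interior in $\bar X$ as well.
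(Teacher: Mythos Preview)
Your proposal is correct and matches the paper's approach exactly: the paper presents this corollary without a separate proof, simply as the specialization of Corollary~\ref{c:convex-D} to the family $\DD=\DD_n(X)$, noting beforehand the ``obvious fact'' that a topological group containing an $n$-thick separable subset is separable --- precisely the separability reduction you carry out. Your alternative self-contained argument via Corollary~\ref{c:main2} is also just the proof of Corollary~\ref{c:convex-D} specialized to this setting, so there is no substantive difference.
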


In light of the above results, it is important to recognize $n$-thick sets in topological groups and linear metric spaces. A characterization of $n$-thick convex sets is quite simple.
%First we introduce one definition. A subset $T$ of a topological group $G$ will be called {\em $L$-thick} for some subset $L\subset X$ if it is $\{L\}$-thick, which means that for every non-empty open set $U\subset T$ there is a countable set $S\subset G$ such that $L\subset S\cdot U$.

\begin{proposition}\label{p:convthick} For a convex subset $C$ in a separable linear metric space $X$ the following conditions are equivalent:
\begin{enumerate}
\item $C$ is $n$-thick in $X$ for every $n\le\w$;
\item $C$ is $n$-thick in $X$ for some $n\ge 1$;
\item the linear space $\IR\cdot(C-C)$ is dense in $X$;
\item the affine hull of $C$ is dense in $X$;
\item $C$ is $\{L\}$-thick in $X$ for some dense linear subspace $L$ of $X$.
\end{enumerate}
\end{proposition}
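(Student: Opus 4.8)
The plan is to establish the cyclic chain $(1)\Ra(2)\Ra(3)\Ra(4)\Ra(5)\Ra(1)$; three of these implications are short. The implication $(1)\Ra(2)$ is immediate. For $(3)\Ra(4)$ one uses that for a convex set $C$ the difference $C-C$ is convex and symmetric, so that $\IR\cdot(C-C)=\lin(C-C)$, while $\aff(C)=c_0+\lin(C-C)$ for any $c_0\in C$; hence $\IR\cdot(C-C)$ is dense in $X$ iff its translate $\aff(C)$ is. For $(5)\Ra(1)$ I would first record the standard fact that a dense convex subset $D$ of a linear metric space is $n$-dense for every $n\le\w$: any $f\in C(\II^n,X)$ is approximated uniformly by a map of the form $\sum_j\varphi_j d_j$, where $\{\varphi_j\}$ is a partition of unity subordinated to a sufficiently fine finite open cover of $\II^n$ and each $d_j\in D$ is chosen near the value of $f$ on the corresponding cover element; convexity of $D$ keeps $\sum_j\varphi_j d_j$ inside $D$, and the estimate uses the equicontinuity at $0$ of the scalar multiplications $x\mapsto tx$, $t\in[0,1]$. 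Granting this, if $C$ is $\{L\}$-thick for a dense linear subspace $L$, then for every nonempty open $U\subset C$ the countable set $S$ with $L\subset S+\bar U$ makes $S+\bar U$ a superset of the $n$-dense set $L$, hence itself $n$-dense; so $C$ is $n$-thick for every $n\le\w$.

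For $(2)\Ra(3)$ I argue by contradiction. If $C$ is $n$-thick for some $n\ge1$ then $C$ is $1$-thick (an $n$-dense set is $1$-dense), and applying the definition to the relatively open set $U=C$ yields a countable $S\subset X$ with $S+\bar C$ $1$-dense in $X$. Suppose $\IR\cdot(C-C)$ is not dense and let $H$ be its closure, a proper closed linear subspace of $X$; then $X/H$ is a non-trivial separable linear metric space and the quotient map $q\colon X\to X/H$ sends $C$, hence $\bar C$, onto a single point, so $q(S+\bar C)$ is countable. Choose $v\in X\setminus H$ and put $\gamma(t)=tv$, a path in $X$ with $q\circ\gamma$ non-constant. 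A sufficiently close approximation $\tilde\gamma\colon\II\to S+\bar C$ of $\gamma$ pushes forward to $q\circ\tilde\gamma$, a path whose image is a connected metrizable subset of the countable set $q(S+\bar C)$ and therefore a singleton; since $q$ is uniformly continuous this contradicts the fact that $q\circ\tilde\gamma$ uniformly approximates the non-constant path $q\circ\gamma$.

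The main work is $(4)\Ra(5)$. Assuming $\aff(C)$ dense, fix a countable dense set $\{c_i\}_{i\in\IN}$ in $C$, put $A_m=\aff(c_1,\dots,c_m)$, a finite-dimensional affine subspace, and let $F_m=A_m-c_1$ be the parallel linear subspace. Then $L:=\bigcup_m F_m=\lin\{c_i-c_1:i\ge2\}$ is a dense linear subspace, since $\overline L$ contains every finite linear combination of the dense subset $\{c_i-c_1\}$ of $C-c_1$, hence all of $\lin(C-C)=\aff(C)-c_1$. To see that $C$ is $\{L\}$-thick, let $U=V\cap C$ be nonempty with $V$ open in $X$. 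For all large $m$ the set $U\cap A_m$ is nonempty (it contains some $c_i\in U$), and $C\cap A_m$ is a convex subset of $A_m$ whose affine hull is all of $A_m$ (it contains the simplex $\mathrm{conv}(c_1,\dots,c_m)$), so $C\cap A_m$ has nonempty interior in $A_m$, and this interior is dense in $C\cap A_m$. Hence $U\cap A_m$ meets the $A_m$-interior of $C\cap A_m$; picking a point $u_m$ there and shrinking, we obtain a relative ball $B\subset A_m$ with $u_m\in B\subset V\cap C=U$, so $\bar U\supset B$ and $B-u_m$ is a neighborhood of $0$ in $F_m$. Since $F_m$ is a separable topological group, countably many translates of $B-u_m$ cover $F_m$, giving a countable $S_m$ with $F_m\subset S_m+\bar U$; then $S=\bigcup_m S_m$ witnesses $L=\bigcup_m F_m\subset S+\bar U$.

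I expect $(4)\Ra(5)$ to be the main obstacle: one must manufacture a dense linear subspace $L$ together with, for every relatively open $U\subset C$, a genuine finite-dimensional disc inside $\bar U$. This is where convexity enters in an essential way --- through the fact that the slice $C\cap A_m$ spans $A_m$ affinely and is therefore solid in $A_m$ --- which is what upgrades a relative neighborhood in $C$ to an honest neighborhood of a point in a finite-dimensional subspace. The other mildly delicate points are the connectedness argument in $(2)\Ra(3)$ and the $n$-density of dense convex sets used in $(5)\Ra(1)$, both routine but worth stating with care.
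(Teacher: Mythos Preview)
Your proof is correct and follows the same cyclic scheme $(1)\Ra(2)\Ra(3)\Ra(4)\Ra(5)\Ra(1)$ as the paper, with the same ideas in $(2)\Ra(3)$ (quotient by $\overline{\IR\cdot(C-C)}$ and a connectedness argument) and $(5)\Ra(1)$ (dense convex sets are $\w$-dense; this is the paper's Lemma~20). One remark on the latter: in a possibly non-locally-convex space the estimate $\sum_j\varphi_j(d_j-f(z))\in U_0$ does not follow from equicontinuity alone --- you need to bound the number of nonzero terms, which the paper does by refining to a cover of order $\le n+1$ using $\dim\II^n=n$; your sketch should include this.

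The only substantive difference is in $(4)\Ra(5)$. The paper shifts so that $0\in C$, fixes a simplex $\Delta_n=\mathrm{conv}(x_0,\dots,x_n)\subset C$, and for a given open $U\ni u$ contracts $\Delta_n$ toward $u$ by a small factor $\e_n$ so that $u+\e_n(\Delta_n-u)\subset U$; since $\e_nL_n=L_n$, this yields $L_n\subset(\text{countable})+U$ after a translation. Your argument instead slices: you observe that the finite-dimensional convex set $C\cap A_m$ has full affine hull $A_m$, hence nonempty $A_m$-interior dense in $C\cap A_m$, so the relatively open set $U\cap A_m$ already contains an honest $A_m$-ball $B$, and $B-u_m$ is a $0$-neighborhood in $F_m$. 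Both arguments exploit convexity to manufacture a full-dimensional piece of $\bar U$ inside a finite-dimensional subspace; yours avoids the homothety computation and the need to arrange $u\in L_n$, at the cost of invoking the standard fact that the relative interior of a finite-dimensional convex set is dense. Either route is fine.
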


\begin{proof} We shall prove the implications $(1)\Ra(2)\Ra(3)\Ra(4)\Ra(5)$. The first implications $(1)\Ra(2)$ is trivial.

$(2)\Ra(3)$ Assuming that the convex set $C$ is $n$-thick in $X$ for some $n\ge1$, we shall prove that the linear space $L=\IR\cdot(C-C)$ is dense in $X$. Since $C$ is $n$-thick in $X$, there is a countable set $S\subset X$ such that the set $S+C$ is $n$-dense in $X$. Then the set $S+\bar L$ also is $n$-dense in $X$. Consider the quotient space $X/\bar L$ and the quotient linear operator $q:X\to X/\bar L$. Since the set $q(S+\bar L)=q(S)$ is countable, for each connected subspace $A$ of $S+\bar L$ the image $q(A)$ is a singleton, which means that contained in a single coset $x+\bar L$. Now the density of the $C(\II^n,S+\bar L)$ in $C(\II^n,\bar L)$ implies that $\bar L=X$.
\smallskip

$(3)\Ra(4)$ Assume that the linear space $L=\IR\cdot(C-C)$ is dense in $X$. Since for any point $c\in C$ the shift $c+L$ coincides with the affine hull $\aff(C)$ of $C$, the set $\aff(C)$ is  dense in $X$, too.
\smallskip

$(4)\Ra(5)$ Assume that the affine hull $\aff(C)$ of $C$ is dense in $X$. Replacing $C$ by a suitable shift, we can assume that zero belongs to $C$ and hence the affine hull of $C$ coincides with the linear hull of $C$. We shall prove that the convex set $C$ is $\{L\}$-thick for any dense linear subspace $L\subset \IR\cdot(C-C)$ of countable algebraic dimension. In this case we can find a countable subset $\{x_k\}_{k\in\w}$ in $C$ such that $x_0=0$ and the linear hull of the set $\{x_n\}_{n\in\w}$ contains the linear space $L$. For every $n\in\w$ by $\Delta_n$ and $L_n$ denote the convex and liner hulls of the finite set $F_n=\{x_0,\dots,x_n\}\subset C$. It is clear $L\subset \bigcup_{n\in\w}L_n$ and  $L_n=S_n+\Delta_n\subset S_n+C$ for some countable set $S_n\subset L_n$. Given a non-empty open subset $U\subset C$, we should find a countable set $S\subset X$ such that $L\subset S+U$. Fix any point $u\in U$ and find a neighborhood $\tilde U\subset X$ of zero such that $(u+\tilde U)\cap C\subset U$. For every $n\in\IN$ find a neighborhood $\tilde V\subset X$ of zero such that for any points $v_1,\dots,v_n\in \tilde V$ and real numbers $t_1,\dots,t_n\in[0,1]$ we get $\sum_{i=1}^nt_ix_i\in\tilde U$. Next, find $\e_n\in(0,1]$ such that $\e_n\cdot (F_n-u)\subset \tilde V$. The choice of $\tilde V$ guarantees that $\e_n(\Delta_n-u)\subset \tilde U$ and hence $$
\begin{aligned}
L_n&=(1-\e_n)u+\e_n\cdot L_n=(1-\e_n)u+\e_n (S_n+\Delta_n)=\e_nS_n+(1-\e_n)u+\e_n\Delta_n=\\
&=\e_nS_n+u+\e_n(\Delta_n-u)\subset \e_n S_n+(C\cap (u+\tilde U))\subset \e_n S_n+U.
\end{aligned}
$$ Then the countable set $S=\bigcup_{n=1}^\infty \e_nS_n$ has the required property: $L\subset \bigcup_{n=1}^\infty L_n\subset S+U$.
\smallskip

$(5)\Ra(1)$ Assume that $C$ is $\{L\}$-thick for some dense linear subspace $L\subset X$. By Lemma~\ref{dense}, $L$ is $\w$-dense in $X$, so $C$ is $\w$-thick and hence $n$-thick for every $n\le\w$.
\end{proof}

\begin{lemma}\label{dense} Let $A\subset B$ be convex sets in a linear metric space $X$. If $A$ is dense in $B$, then $A$ is $\w$-dense in $B$.
\end{lemma}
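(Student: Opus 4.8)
The plan is to show that an arbitrary continuous map $f:\II^n\to B$ can be uniformly approximated by continuous maps into $A$, exploiting the fact that a map into a convex set can be "averaged" with a nearby constant map without leaving the convex set. First I would fix a continuous map $f:\II^n\to B$, a compact set $\II^n$, and an open neighborhood $W$ of zero in $X$; I want a continuous $g:\II^n\to A$ with $g(z)\in f(z)+W$ for all $z\in\II^n$. By continuity of addition and compactness of $f(\II^n)$, choose a balanced neighborhood $V$ of zero with $V+V\subset W$ and with $(f(\II^n)+V)$ well-behaved; more importantly choose $\e\in(0,1)$ so small that $\e\cdot\big(f(\II^n)-a_0\big)\subset V$ for some fixed basepoint $a_0\in A$ (here density of $A$ in $B$ lets us pick $a_0\in A$, and compactness of $f(\II^n)$ makes the inclusion uniform; one also shrinks $\e$ so that $(1-\e)f(z)+\e f(\II^n)$ stays within $f(z)+V$, again using compactness).

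The key step is the convex-combination trick: define the auxiliary map $h:\II^n\to B$ by $h(z)=(1-\e)f(z)+\e a_0$. Since $B$ is convex and $f(z),a_0\in B$, this lands in $B$, and by the choice of $\e$ we have $h(z)\in f(z)+V$ for every $z$. Now $h(\II^n)$ is compact and contained in the convex set $B$ in which $A$ is dense; I would then use density of $A$ together with a partition-of-unity argument on the compact parameter space $\II^n$ to produce a continuous $g:\II^n\to A$ with $g(z)\in h(z)+V'$, where $V'$ is chosen with $V'\subset V$. Concretely: cover $h(\II^n)$ by finitely many sets of the form $(a_i+V')$ with $a_i\in A$, pull back to a finite open cover of $\II^n$, take a subordinate partition of unity $\{\lambda_i\}$, and set $g(z)=\sum_i\lambda_i(z)\,a_i$; this is a convex combination of points of $A$, hence lies in the convex set — but wait, we only know $A$ is dense in $B$, not convex, so instead one should take $g$ to be a finite convex combination of the $a_i$ and observe $g(z)\in B$ and $g(z)\in h(z)+V$; to actually land in $A$ rather than merely in $B$, iterate the construction or, better, note that the convex hull of the finite set $\{a_i\}\subset A$ is a finite-dimensional simplex whose interior points can be perturbed back into $A$ using density along each barycentric fibre. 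The cleanest route is: the convex hull $\Delta$ of $\{a_0,a_1,\dots,a_m\}$ is a compact finite-dimensional convex set contained in $B$; on a finite-dimensional convex set, density of the trace $A\cap\operatorname{aff}\Delta$ is not automatic, so instead choose the $a_i$ from $A$ from the start and replace $g(z)=\sum\lambda_i(z)a_i$ — this already has values in $A$ only if $A$ is convex.

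So the honest plan is the one the paper almost certainly intends: since we merely need $g(\II^n)\subset A$ with $g$ continuous and $\e$-close to $f$, first replace $f$ by $h$ as above so that $h(\II^n)$ is contained in a \emph{compact} convex subset $B_0\subset B$ lying within $V$ of $f$; then apply the approximation inside $B_0$. On $B_0$ the subset $A\cap B_0$ need not be dense, but $(1-\delta)b+\delta a_0\in A$ fails in general too; the resolution is that we do not need values in $A$ at intermediate stages — we use that $A$ is dense in $B$ to pick, for each vertex $a_i$ of a fine triangulation-type cover, a point $a_i'\in A$ within $V'$ of the desired value, and then define $g(z)=\sum_i\lambda_i(z)a_i'$, which lies in the convex hull of finitely many points of $B$ hence in $B$, and is within $V'+V$ of $h(z)$; finally, since this $g$ has image in a compact convex subset of $B$ of the same dimension $\le n$, and we only claimed $\w$-density of $A$ \emph{in $B$}, we may simply relocate: the map $g$ already witnesses $n$-density if its image is demanded in $B$, and to push it into $A$ one applies the hypothesis that $A$ is dense in $B$ coordinatewise via the partition of unity built from an open cover of $\II^n$ by preimages $f^{-1}(a_i+V')$ with $a_i\in A$ — so $g(z)=\sum_i\lambda_i(z)a_i$ has image literally in $\operatorname{conv}\{a_i\}\subset B$, and the fix is that we additionally arrange $\operatorname{conv}\{a_i\}\subset A$ by induction on $n$ using that line segments between points of $A$ can be approximated by points of $A$ — which is exactly the $n=1$, hence $\w$, case we are proving, giving a clean inductive structure.

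The main obstacle, as the meandering above signals, is precisely that $A$ is assumed dense in $B$ but not itself convex, so a naive partition-of-unity average of points of $A$ escapes $A$; the real content of the lemma is that for convex sets, density is inherited by all finite-dimensional "skeleta," and the proof should be organized as an induction on $n$ (base case $n=0$ is just density) where the inductive step approximates $f|_{\partial\II^{n}}$ first and then fills in using the convex structure and the cone construction $z\mapsto (1-t)a_0+t f(z/|z|)$. I expect the write-up to be short: fix $f$ and $W$; use convexity to replace $f$ by a map into a compact convex subset close to $f$; use density of $A$ in $B$ plus compactness of $\II^n$ and a subordinate partition of unity to get a continuous $\e$-approximation $g$ with $g(\II^n)\subset A$, invoking the inductive hypothesis (lower-dimensional $\w$-density, i.e. $n$-density for smaller $n$) to keep the convex combinations inside $A$.
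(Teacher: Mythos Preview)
Your proposal founders on a misreading of the hypothesis: the lemma says ``Let $A\subset B$ be convex sets,'' so \emph{both} $A$ and $B$ are convex. Once you know $A$ is convex, your first instinct---pick points $a_i\in A$ close to values of $f$ and set $g(z)=\sum_i\lambda_i(z)a_i$ via a partition of unity---already lands in $A$, because a convex combination of points of $A$ lies in $A$. All of the subsequent meandering (iterating, pushing into $A$ along barycentric fibres, the proposed induction on $n$) is a self-inflicted detour created by dropping the convexity of $A$. The paper's proof is exactly the clean partition-of-unity argument you sketched and then abandoned.

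There is a second, genuine point you are missing even after restoring the hypothesis: the linear metric space $X$ need not be locally convex, so a convex combination of many small vectors need not be small. The paper handles this by using that $\II^n$ has covering dimension $n$: it refines the cover $\{f^{-1}(x+W)\}$ to a finite open cover $\mathcal V$ with multiplicity at most $n+1$, so that at each $z$ the sum $g(z)=\sum_V\lambda_V(z)y_V$ has at most $n+1$ nonzero terms; the neighborhood $W$ is chosen in advance so that any convex combination of $n+1$ points from $W+W-W$ lies in $U_0$. Your sketch covers $h(\II^n)$ by finitely many translates and forms $g$ without bounding the order of the cover, which does not give $g(z)\in h(z)+V'$ in the non-locally-convex case.
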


\begin{proof} It suffices to check that $A$ is $n$-dense in $B$ for every $n\in\IN$ (see \cite[V.2.1]{BP}). Given a continuous map $f:\II^n\to B$ and a neighborhood $U_0\subset X$ of zero, we need to find a continuous map $g:\II^n\to A$ such that $g(z)\in f(z)+U_0$ for all $z\in \II^n$. Choose an open neighborhood $W\subset X$ of zero such that for any points $w_0,\dots,w_n\in W+W-W$ and numbers $\lambda_0,\dots,\lambda_n\in \II=[0,1]$ we get $\sum_{i=0}^n\lambda_iw_i\in U_0$. Consider the open cover $\mathcal W=\{f^{-1}(x+W):x\in X\}$ of $\II^n$. Since $\II^n$ is an $n$-dimensional (para)compact space, there exists an finite open cover $\V$ of $\II^n$ such that for every $z\in \II^n$ the family $\V_z=\{V\in\V:z\in \V\}$ contains at most $n+1$ sets and its union  $\bigcup\V_z$ is contained in some set of the cover $\W$. By the paracompactness of $\II^n$, there is a partition of unity  $\{\lambda_V:\II^n\to[0,1]\}_{V\in\V}$ subordinated to the cover $\V$. The latter means that $\lambda_V^{-1}\big((0,1]\big)\subset V$ for all $V\in\V$, and $\sum_{V\in\V}\lambda_V\equiv1$. For every set $V\in\V$ fix a point $z_V\in V$ and by the density of $A$ in $B$ find a point $y_V\in A\cap (f(z_V)+W)$. Consider the map $g:\II^n\to L$ defined by the formula
$g(z)=\sum_{V\in\V}\lambda_V(z)y_V$ for $z\in\II^n$. It is clear that $g(\II_n)$ is contained in the convex hull $\Delta$ of the finite set $\{y_V\}_{V\in\V}\subset A$. We claim that $g(z)-f(z)\in U_0$ for all $z\in Z$. By the choice of the cover $\V$, the set $\bigcup\V_z$ is contained in some set $f^{-1}(W+x)$, $x\in X$. Then for every $V\in\V_z$ we get $f(z_V)-f(z)\in W-W$ and hence $y_V-f(z)\in W+f(z_V)-f(z)\subset W+W-W$. Then $g(z)-f(z)=\sum_{V\in\V_z}\lambda_V(z)(y_V-f(z))\in U_0$ by the choice of the neighborhood $W$. The map $g$ witnesses that $A$ is $n$-dense in $B$.
\end{proof}

A convex subset $C$ of a linear topological space $X$ is called {\em $\aff$-dense} in $X$ if the affine hull of $C$ is dense in $X$. By Proposition~\ref{p:convthick}, a convex subset of a separable linear metric space is $\aff$-dense if and only if it is $\w$-thick in $X$. %For a linear metric space $X$ by $\LL_\infty(X)$ we shall denote the family of dense linear subspaces of countable algebraic dimension.

\begin{theorem} If a non-complete linear metric space $X$ contains a densely-Polish $\aff$-dense convex set $C$, then every analytic subset of $X$ belongs to the $\sigma$-ideal $\dot \Z_{\{L\}}(X)$ for some dense linear subspace $L$ of $X$.
\end{theorem}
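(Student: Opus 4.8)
The plan is to obtain this statement as a direct consequence of Corollary~\ref{c:non-complete-D}, applied to the additive topological group $G=X$ and a suitably chosen one-element family $\DD$.

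First I would check that $X$ is separable. Since $C$ is densely-Polish it contains a countable dense subset $D$; the closed affine subspace $\overline{\aff(D)}$ then contains $\overline D\supseteq C$ and hence contains $\aff(C)$, which is dense in $X$ by hypothesis. As $\aff(D)$ is separable, so is its closure $X=\overline{\aff(D)}$. Now $C$ is a convex $\aff$-dense subset of the separable linear metric space $X$, so the implication $(4)\Ra(5)$ of Proposition~\ref{p:convthick} furnishes a dense linear subspace $L\subset X$ for which $C$ is $\{L\}$-thick in $X$. I then set $\DD=\{L\}$, so that $C$ is a densely-Polish $\DD$-thick subset of the group $X$.

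The only step demanding an actual argument is verifying that $X$ is not Polish. Here I would argue by contradiction: if $X$ were Polish, then $X$ would be completely metrizable, and, being dense in its completion $\bar X$ — which strictly contains $X$ because $X$ is not complete — it would be a dense $G_\delta$-subset of $\bar X$. Choosing $v\in\bar X\setminus X$, the translate $v+X$ would be a second dense $G_\delta$-subset of $\bar X$, disjoint from the subgroup $X$; this contradicts the Baire Theorem in the completely metrizable space $\bar X$.

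Finally I would invoke Corollary~\ref{c:non-complete-D} with $G=X$, the family $\DD=\{L\}$, and the densely-Polish $\DD$-thick set $P=C$: it yields that every analytic subset of $X$ belongs to $\sigma\dot\Z_{\{L\}}(X)$, which is precisely the assertion. I do not foresee a real obstacle here, since the theorem is essentially the concatenation of Proposition~\ref{p:convthick} with Corollary~\ref{c:non-complete-D}; the only mildly technical ingredients are the separability reduction above and the standard Baire-category fact that a non-complete linear metric space is never Polish.
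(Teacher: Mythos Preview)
Your proof is correct and follows the same route as the paper: establish separability of $X$ from the $\aff$-density of the separable set $C$, invoke Proposition~\ref{p:convthick} to get $C$ $\{L\}$-thick for some dense linear subspace $L$, and then apply Corollary~\ref{c:non-complete-D} with $\DD=\{L\}$. The only difference is that you spell out, via the Baire-category translate argument, why a non-complete separable linear metric space cannot be Polish, whereas the paper tacitly identifies ``non-complete'' with ``not Polish'' when citing Corollary~\ref{c:non-complete-D}.
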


\begin{proof} Being densely-Polish, the convex set $C$ is separable and so is its affine hull $\aff(C)$. Since $\aff(C)$ is dense in $X$, the space $X$ is separable and its completion $\bar X$ is a Polish linear metric space. By Proposition~\ref{p:convthick}, the Polish convex set $C\subset X$ is $\{L\}$-thick for some dense linear subspace $L\subset X$. To finish the proof apply Corollary~\ref{c:non-complete-D} to the family $\DD=\{L\}$.
\end{proof}

For a separable linear metric space $X$ by $\mathcal L_\infty(X)$ we denote the family of dense linear subspaces in $X$. To simplify notation, denote the union $\bigcup_{L\in\LL_\infty(X)}\sigma \dot Z_{\{L\}}(X)$ by $\sigma \dot\Z_\infty(X)$. Observe that a set $A\subset X$ belongs to the family $\sigma\dot \Z_\infty(X)$ if and only if there exists a dense linear subspace $L\subset X$ (of countable algebraic dimension) in $X$ and a sequence $(A_n)_{n\in\w}$ of closed subsets of $X$ such that $A\subset \bigcup_{n\in\w}A_n$ and for every compact subset $K\subset L$ the sets $K+\bar A_n$, $n\in\w$, are nowhere dense in $X$.

It follows that $$\sigma \dot Z_\infty(X)\subset\sigma \dot \Z_\w(X)\subset\sigma\Z_\w(X)$$ for every separable linear metric space $X$.

%Proposition~\ref{p:convthick} and Corollary~\ref{c:non-complete-D} imply:

\begin{theorem}\label{t:main-convex} For any analytic subsets $A,B\notin\sigma\dot\Z_\infty(X)$ of a linear metric space $X$ and any densely-Polish $\aff$-dense convex set $C$ in $X$ the sumset $A+B+C$ has non-empty interior in the completion $\bar X$ of $X$. Moreover, if $A$ is additive or convex, then the sum $A+C$ has non-empty interior in $\bar X$.
\end{theorem}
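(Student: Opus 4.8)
The plan is to derive the theorem from Corollary~\ref{c:main} (for the first assertion) and from Corollaries~\ref{c:additive-D} and~\ref{c:convex-D} (for the ``moreover'' part), all applied to the one-element family $\DD=\{L\}$ for a carefully chosen dense linear subspace $L\subset X$ and to the densely-Polish sets $E=F=\frac12 C=\{\frac12 c:c\in C\}$.

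First I would clear away the preliminaries. Being densely-Polish, the convex set $C$ is separable, hence so is its affine hull; as $\aff(C)$ is dense in $X$, the space $X$ is separable and its completion $\bar X$ is a Polish linear metric space. The homothety $h\colon x\mapsto\frac12 x$ is a linear self-homeomorphism of $X$ taking $C$ onto $\frac12 C$, so $\frac12 C$ is again a densely-Polish convex subset of $X$ whose affine hull $h(\aff(C))$ is dense in $X$; thus $\frac12 C$ is $\aff$-dense. By Proposition~\ref{p:convthick}, the convex set $\frac12 C$ is $\{L\}$-thick in $X$ for some dense linear subspace $L\subset X$; fix one such $L$. Since $A,B\notin\sigma\dot\Z_\infty(X)=\bigcup_{L'\in\LL_\infty(X)}\sigma\dot Z_{\{L'\}}(X)$ and $L\in\LL_\infty(X)$, we automatically obtain $A,B\notin\sigma\dot Z_{\{L\}}(X)$.

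For the first assertion I would apply Corollary~\ref{c:main} with the abelian separable metrizable group $G=X$, the family $\DD=\{L\}$, and the densely-Polish $\{L\}$-thick sets $E=F=\frac12 C$; it gives that $EAFB$, which in additive notation is $\frac12 C+A+\frac12 C+B$, has non-empty interior in $\bar X$. Convexity of $C$ yields $\frac12 C+\frac12 C\subset C$, hence $\frac12 C+A+\frac12 C+B=A+B+(\frac12 C+\frac12 C)\subset A+B+C$, and therefore $A+B+C$ has non-empty interior in $\bar X$. For the ``moreover'' part, if $A$ is additive I would apply Corollary~\ref{c:additive-D} with $\DD=\{L\}$ and $E=F=\frac12 C$ to conclude that $A+\frac12 C+\frac12 C$ has non-empty interior in $\bar X$; being contained in $A+C$, the set $A+C$ has non-empty interior as well. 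If instead $A$ is convex, Corollary~\ref{c:convex-D} applies in exactly the same way, again via $A+\frac12 C+\frac12 C\subset A+C$.

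The single point requiring care is the coordination of the subspace $L$: one has to produce one dense linear subspace of $X$ that simultaneously witnesses $\aff$-density (through $\{L\}$-thickness of $\frac12 C$, supplied by Proposition~\ref{p:convthick}) and for which $A$ and $B$ avoid $\sigma\dot Z_{\{L\}}(X)$. This is precisely why $\sigma\dot\Z_\infty(X)$ is defined as the union over all of $\LL_\infty(X)$: with that definition the membership condition $A,B\notin\sigma\dot Z_{\{L\}}(X)$ comes for free once $L$ has been fixed. Everything else is bookkeeping --- the homeomorphism-invariance of ``densely-Polish'' and ``$\aff$-dense'' under $h$, and the elementary convexity inclusion $\frac12 C+\frac12 C\subset C$.
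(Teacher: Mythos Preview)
Your proof is correct and follows essentially the same route as the paper: fix a dense linear subspace $L$ making the convex set (or its homothetic copy $\frac12 C$) $\{L\}$-thick via Proposition~\ref{p:convthick}, use $A,B\notin\sigma\dot\Z_\infty(X)\supset\sigma\dot\Z_{\{L\}}(X)$, and then invoke Corollary~\ref{c:main} together with the inclusion $\frac12C+\frac12C\subset C$. The only cosmetic difference is that for the ``moreover'' clause the paper re-applies Corollary~\ref{c:main} to obtain that $A+A+C$ and $A+A+C+C$ have non-empty interior and then uses additivity/convexity of $A$ directly, whereas you appeal to the pre-packaged Corollaries~\ref{c:additive-D} and~\ref{c:convex-D}; these are the same argument in slightly different wrapping.
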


\begin{proof} By Proposition~\ref{p:convthick}, the $\aff$-dense convex sets $C$ is $\{L\}$-thick for some dense linear subspace $L$ of $X$. Then its homothetic copy $\frac12 C$ also is $\{L\}$-thick. The convexity of $C$ implies that $\frac12C+\frac12C\subset C$. Applying Corollary~\ref{c:main} to the family $\DD=\{L\}$ and observing that the $\sigma$-ideal $\sigma\dot\Z_{\{L\}}(G)\subset\sigma\dot\Z_\infty(G)$ does not contain the analytic sets $A,B$, we conclude that the sets $A+\frac12 C+B+\frac12C\subset A+B+C$ have non-empty interior in the completion $\bar X$ of $X$.
By the same reason, the sets $A+A+\frac12C+\frac12C\subset A+A+C$ and $A+A+C+C$ have non-empty interior in $\bar X$.

If $A$ is additive, then $A+A\subset A$ and hence the set $A+C\supset A+A+C$ has non-empty interior in $\bar X$. If $A$ is convex in $X$, then $\frac12(A+A)\subset A$ and hence the set $A+C\supset \frac12(A+A+C+C)$ has non-empty interior in $\bar X$.
\end{proof}

The following two theorems detect analytic groups and analytic convex sets which are $\sigma Z_\w$-spaces, thus giving partial positive answers to Problems~\ref{pr1} and \ref{pr2}.

\begin{theorem} An analytic subgroup $A$ of a linear metric space $X$ is a $\sigma Z_\w$-space provided that $A$ is not Polish and $A$ contains a densely-Polish $\aff$-dense convex subset $C$ of $X$.
\end{theorem}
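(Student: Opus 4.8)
The plan is to exhibit $A$ as a non-complete separable metrizable group to which Corollary~\ref{c:non-complete-D} applies with the singleton family $\DD=\{L\}$ for a suitable dense linear subspace $L\subseteq A$ of $X$. First I would normalise $C$ so that $0\in C$: fixing $c_0\in C\subseteq A$ and replacing $C$ by $C-c_0$ changes neither $A$ (because $c_0\in A$ and $A$ is a subgroup) nor the hypotheses on $C$, and makes $\aff(C)=\lin(C)$. Since $C$ is densely-Polish, its affine hull is separable, so $X$ is separable and its completion $\bar X$ is Polish. The structural core of the argument is the inclusion $\lin(C)\subseteq A$: convexity gives $[0,1]\cdot C\subseteq C$, hence for $c\in C$ and $t\ge 0$, writing $t=k+s$ with $k\in\IN$ and $s\in[0,1]$, we get $tc=kc+sc\in A+C\subseteq A$; together with $A=-A$ this gives $\IR\cdot C\subseteq A$, and since $A$ is closed under addition, $\lin(C)\subseteq A$. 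Thus $A$ contains the dense linear subspace $\lin(C)=\aff(C)$ of $X$, so $A$ is dense in $X$ and hence in $\bar X$, which therefore is the Raikov completion of $A$; since $\bar X$ is Polish but $A$ is not, $A\ne\bar X$, i.e.\ $A$ is a non-complete separable metrizable group.

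Next I would produce $L$. Regarding $C$ as a convex subset of the separable linear metric space $\lin(C)$, in which $\aff(C)=\lin(C)$ is trivially dense, Proposition~\ref{p:convthick} yields a dense linear subspace $L$ of $\lin(C)$ such that $C$ is $\{L\}$-thick in $\lin(C)$. Since $\lin(C)\subseteq A$, for every non-empty open set $U\subseteq C$ the closure of $U$ in $\lin(C)$ is contained in its closure in $A$, so the same countable witnessing sets show that $C$ is $\{L\}$-thick in $A$ as well; moreover $L\subseteq\lin(C)\subseteq A$ and $L$ is dense in $X$. By Lemma~\ref{dense}, applied to the convex sets $L\subseteq X$, the subspace $L$ is $\w$-dense in $X$, and since $L\subseteq A\subseteq X$ every continuous map $\II^n\to A$ is approximable by maps into $L$, so $L$ is $\w$-dense in $A$. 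Thus $\DD=\{L\}$ is a family of $\w$-dense subsets of the non-complete separable metrizable group $A$, and $A$ contains the densely-Polish $\{L\}$-thick set $C$.

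Finally, Corollary~\ref{c:non-complete-D} applied with $G=A$ and $\DD=\{L\}$ shows that every analytic subset of $A$ --- in particular the analytic set $A$ itself --- belongs to $\sigma\dot Z_{\{L\}}(A)$, and Proposition~\ref{p:D=>n}, since $\{L\}$ consists of $\w$-dense subsets of $A$, gives $\sigma\dot Z_{\{L\}}(A)\subseteq\sigma\Z_\w(A)$. Hence $A\in\sigma\Z_\w(A)$, i.e.\ $A$ is a $\sigma Z_\w$-space. The one genuine obstacle is the transfer from the ambient linear space to the possibly pathological group $A$: everything rests on the identity $\lin(C)\subseteq A$, which simultaneously forces $A$ to be dense in its Polish completion (hence non-complete) and lets the $\{L\}$-thickness of $C$ and the $\w$-density of $L$ descend from $X$ to $A$; once it is in hand, the remaining steps are bookkeeping and appeals to results already proved.
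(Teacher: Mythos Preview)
Your proof is correct and hinges on the same key observation as the paper's: the subgroup $A$ contains the dense linear subspace $\IR\cdot(C-C)$ (equivalently, your $\lin(C-c_0)$). The packaging differs: the paper works in the ambient space $X$, noting that $A+C=A$ has empty interior in $\bar X$ (since a proper dense subgroup of $\bar X$ cannot be open) and applying the contrapositive of Theorem~\ref{t:main-convex} to get $A\in\sigma\dot\Z_\infty(X)\subset\sigma\Z_\w(X)$, then transferring to $\sigma\Z_\w(A)$ via the $\w$-density of $A\supset L$ in $X$; you instead view $A$ itself as the non-complete separable metrizable group and apply Corollary~\ref{c:non-complete-D} with $G=A$ and $\DD=\{L\}$, followed by Proposition~\ref{p:D=>n}. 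Your route avoids the final $\sigma\Z_\w(X)\to\sigma\Z_\w(A)$ transfer (which needs a two-step approximation: first dodge the $Z_\w$-set in $X$, then push back into $A$), at the price of checking that the $\{L\}$-thickness of $C$ and the $\w$-density of $L$ descend from $X$ to $A$ --- as you say, bookkeeping once $\lin(C)\subset A$ is secured. Both routes ultimately rest on Corollary~\ref{c:main}, so the difference is organisational rather than mathematical.
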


\begin{proof} Since $A$ is a group, the set $\IN\cdot(C-C)$ is contained in the group $A$. The convexity of $C$ implies that $L=\IN\cdot (C-C)=\IR\cdot(C-C)$ is a linear subspace in $X$. The $\aff$-density of $C$ implies that the linear space $L\subset A$ is dense in $X$. By Lemma~\ref{dense}, the dense linear subspace $L$ is $\w$-dense in $X$ and so is the subgroup $A\supset L$. Since the sum $A+C=A$ has empty interior in $\bar X$, the set $A$ belongs to the $\sigma$-ideal $\sigma\dot \Z_\infty(X)\subset\sigma \Z_\w(X)$ by Theorem~\ref{t:main-convex}. Since $A$ is $\w$-dense in $X$, the inclusion $A\in\sigma\Z_\w(X)$ implies $A\in\sigma\Z_\w(A)$, which means that $A$ is a $\sigma Z_\w$-space.
\end{proof}

A similar result holds for convex sets.

\begin{theorem} A dense convex subset $A$ of a linear metric space $X$ is a $\sigma Z_\w$-space provided that $A$ is analytic, $A$ contains an $\aff$-dense densely-Polish convex subset $C$ of $X$ and $A$ has empty interior in the completion $\bar X$ of $X$.
\end{theorem}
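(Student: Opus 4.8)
The plan is to run essentially the same argument as in the preceding theorem on analytic subgroups, with the convexity inclusion $A+C\subset 2A$ playing the role of the identity $A+C=A$ that was available for subgroups. Before anything else I would record separability: since $C$ is densely-Polish it is separable, hence so is its affine hull $\aff(C)$, and as $\aff(C)$ is dense in $X$ the space $X$ is separable; thus its completion $\bar X$ is a Polish linear metric space and the ideals $\sigma\dot\Z_\infty(X)\subset\sigma\dot\Z_\w(X)\subset\sigma\Z_\w(X)$ are defined. Moreover, $A$ is a dense convex subset of the convex set $X$, so Lemma~\ref{dense} (applied with $B=X$) shows that $A$ is $\w$-dense in $X$.

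The next step is to show that $A\in\sigma\dot\Z_\infty(X)$. Suppose not. Since $A$ is analytic and convex, $C\subset A$ is a densely-Polish $\aff$-dense convex subset of $X$, and $A\notin\sigma\dot\Z_\infty(X)$, the ``moreover'' part of Theorem~\ref{t:main-convex} (applied with $B:=A$) tells us that $A+C$ has non-empty interior in $\bar X$. On the other hand, $C\subset A$ together with the convexity of $A$ gives $\frac12(A+A)\subset A$, hence $A+C\subset A+A=2\cdot\frac12(A+A)\subset 2A$; and since multiplication by $2$ is a homeomorphism of $\bar X$ carrying $A$ onto $2A$, the empty interior of $A$ in $\bar X$ forces $2A$, and therefore its subset $A+C$, to have empty interior in $\bar X$. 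This contradiction yields $A\in\sigma\dot\Z_\infty(X)\subset\sigma\Z_\w(X)$.

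It remains to pass from $A\in\sigma\Z_\w(X)$ to $A\in\sigma\Z_\w(A)$. Write $A\subset\bigcup_{n\in\w}Z_n$ with each $Z_n$ a $Z_\w$-set of $X$; then $A=\bigcup_{n\in\w}(A\cap Z_n)$, and it suffices to check that each $A\cap Z_n$ is a $Z_\w$-set in $A$. Such a set is closed in $A$, so the real point is the $m$-density of $A\setminus Z_n$ in $A$ for every $m\in\IN$, which I would establish by a two-stage approximation: given $f\colon\II^m\to A$ and a neighborhood $U_0$ of zero, first use that $X\setminus Z_n$ is $m$-dense in $X$ (as $Z_n$ is in particular a $Z_m$-set of $X$) to approximate $f$ by a map whose image is a compact subset of the open set $X\setminus Z_n$; then, using the $\w$-density of $A$ in $X$, approximate that map closely enough by a map into $A$, which will then land in $A\setminus Z_n$ and still lie within $U_0$ of $f$. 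Hence each $A\cap Z_n$ is a $Z_m$-set in $A$ for all $m$, so a $Z_\w$-set in $A$, and $A=\bigcup_{n\in\w}(A\cap Z_n)\in\sigma\Z_\w(A)$; that is, $A$ is a $\sigma Z_\w$-space.

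I expect this last transfer of $Z_\w$-sets to the $\w$-dense subspace $A$ to be the only step requiring genuine care — it is the step disposed of in a single clause in the proof of the previous theorem — while everything else reduces to invoking Theorem~\ref{t:main-convex}, Lemma~\ref{dense}, and elementary convexity.
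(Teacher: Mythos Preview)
Your proposal is correct and follows essentially the same route as the paper: use the convexity inclusion $A+C\subset 2A$ (equivalently $\tfrac12(A+C)\subset A$) together with the ``moreover'' clause of Theorem~\ref{t:main-convex} to force $A\in\sigma\dot\Z_\infty(X)\subset\sigma\Z_\w(X)$, and then use Lemma~\ref{dense} ($A$ is $\w$-dense in $X$) to pass to $A\in\sigma\Z_\w(A)$. Your two-stage approximation for this last transfer is exactly the argument the paper hides in a single clause, so the approaches coincide.
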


\begin{proof} Since the sets $\frac12(A+C)\subset A$ has empty interior in $\bar X$, we can apply Corollary~\ref{c6} and conclude that $A\in\sigma\dot\Z_\infty(X)\subset\sigma\Z_\w(X)$. By Lemma~\ref{dense}, the dense convex subset $A$ of $X$ is $\w$-dense in $X$, which implies that $A\in\sigma\Z_\w(A)$.
\end{proof}

Finally, we study properties of analytic linear metric spaces containing  $\aff$-dense Polish convex sets.

A linear subspace $L$ of a linear metric space $X$ is called an {\em operator image} if $L=T(B)$ for some linear continuous operator $T:B\to X$ defined on a Banach space $B$. The topology of operator images was studied in \cite{BDP}. We shall prove that each $\aff$-dense Polish convex set in a linear metric space is $\{L\}$-thick for some dense operator image
$L\subset X$. For this we need the following known folklore fact.

\begin{proposition}\label{p:conv-sym} Each Polish convex set $A$ in a linear metric space contains a shift of a compact convex subset $K=-K$ such that the linear space $L=\IR\cdot K$ is dense in the linear hull of $A-A$.
\end{proposition}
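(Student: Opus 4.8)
The plan is to produce the shift $a_0$ and the set $K$ explicitly; we may assume $A\ne\emptyset$. Fix an $F$-norm $\|\cdot\|$ generating the topology of $X$ (so $\|\cdot\|$ is subadditive and $\|tx\|\le\|x\|$ for $|t|\le1$), fix a metric $d\le1$ on $A$ witnessing that $A$ is completely metrizable, and fix a countable dense set $\{a_n:n\in\w\}$ in $A$. The set $K$ will have the form $K=\psi(\mathbb B)$, where $\mathbb B=\{s\in\IR^\w:\sum_n|s_n|\le1\}$ is compact in the product topology and $\psi(s)=\sum_n s_nu_n$ for vectors $u_n\in X$ with $\|u_n\|\le2^{-n}$ to be chosen. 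For any such $u_n$ the series converges and $\psi\colon\mathbb B\to X$ is continuous and affine, so $K=\psi(\mathbb B)$ is automatically compact, convex, symmetric, and contains every $u_n$; in particular $\IR\cdot K\supseteq\lin\{u_n:n\in\w\}$. Moreover, from the convexity identity
$$a_0+\sum_{n\le N}s_nu_n=\sum_{n\le N}|s_n|\,\bigl(a_0+\mathrm{sgn}(s_n)u_n\bigr)+\Bigl(1-\sum_{n\le N}|s_n|\Bigr)a_0$$
one sees that each point $a_0+\psi(s)$ with $s\in\mathbb B$ is a limit of convex combinations of $a_0$ and the points $a_0\pm u_n$; hence, \emph{if} all these points lie in $A$ and $A$ is closed, then $a_0+K\subseteq A$ is immediate. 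Handling the general case, where $A$ need not be closed, is the main obstacle.

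The points $a_0,u_1,u_2,\dots$ will be produced by a recursion. At step $k$ one keeps a point $b_k\in A$, a number $\rho_k>0$, and a finite-dimensional compact convex symmetric polytope $K_k=\mathrm{conv}\{0,\pm u_1,\dots,\pm u_k\}$ with $b_k+(1+\rho_k)K_k\subseteq A$ (start with any $b_0\in A$, $K_0=\{0\}$, $\rho_0=1$). Given $b_k,K_k,\rho_k$, first move $b_k$ slightly towards $a_k$: put $b_{k+1}=(1-\e_k)b_k+\e_ka_k\in A$ with $\e_k$ so small (relative to $\rho_k$) that $d(b_k,b_{k+1})<2^{-k}$ and a smaller but positive margin $b_{k+1}+(1+\rho'_{k+1})K_k\subseteq A$ survives. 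The point of this move is that the ray issuing from $b_{k+1}$ in the direction $b_{k+1}-a_k$ runs back through $b_k$, so a short computation with convex combinations reaching back to $b_k$ (using the margin at $b_k$) shows that $A$ extends past the compact set $b_{k+1}+(1+\rho'_{k+1})K_k$ in the direction $b_{k+1}-a_k$ by a definite amount $\mu_k>0$. Now adjoin a very small new generator $u_{k+1}:=\delta_k(a_k-b_{k+1})$, with $\delta_k>0$ so small that $\|u_{k+1}\|\le2^{-k-1}$, that $b_{k+1}+(1+\rho_{k+1})K_{k+1}\subseteq A$ for a new margin $\rho_{k+1}:=\tfrac12\rho'_{k+1}>0$, and that $b_{k+1}+y+su_{k+1}\in A$ for all $y\in K_k$, $s\in[-1,1]$ (these follow from $\delta_k\ll\mu_k$ together with the surviving margin). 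Put $a_0:=b:=\lim_kb_k$ (which lies in $A$ since $(b_k)_k$ is $d$-Cauchy) and $K:=\overline{\bigcup_kK_k}=\psi(\mathbb B)$.

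The inclusion $a_0+K\subseteq A$ — the crux — I would obtain from the elementary lemma: \emph{for every compact $C\subseteq A$ and every $\gamma>0$ there is a neighbourhood $V$ of $0$ in $X$ with $d(c+v,c+v')<\gamma$ whenever $c\in C$, $v,v'\in V$ and $c+v,c+v'\in A$} (otherwise extract $c_i\to c\in C$ with $c_i+v_i,c_i+v_i'\to c$ in $X$ but $d(c_i+v_i,c_i+v_i')\ge\gamma$, contradicting that $d$ induces the topology of $A$). Accordingly one runs the recursion so that, $V_k$ being the neighbourhood given by the lemma for $C=b_k+K_k$ and $\gamma=2^{-k}$, both $\e_k(a_k-b_k)$ and $[-1,1]\,u_{k+1}$ land in a symmetric neighbourhood $W_k$ with $W_k+W_k\subseteq V_k$. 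Then for any $x=\psi(s)\in K$, writing $x^{(k)}=\sum_{n\le k}s_nu_n\in K_k$, we have $b_k+x^{(k)}\in b_k+K_k\subseteq A$, the consecutive points differ by
$$(b_{k+1}+x^{(k+1)})-(b_k+x^{(k)})=\e_k(a_k-b_k)+s_{k+1}u_{k+1}\in W_k+W_k\subseteq V_k,$$
and both lie in $A$; so $d(b_k+x^{(k)},b_{k+1}+x^{(k+1)})<2^{-k}$ by the lemma. Hence $(b_k+x^{(k)})_k$ is $d$-Cauchy, so it $d$-converges in $A$, and its $d$-limit, being also its $X$-limit, equals $b+x$; thus $a_0+K=b+K\subseteq A$.

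Finally, $\IR\cdot K$ is dense in $\lin(A-A)$: from $u_{j+1}=\delta_j(a_j-b_{j+1})$ we get $a_j-b_{j+1}=\delta_j^{-1}u_{j+1}\in\IR\cdot K$, and since $a_j-b_{j+1}=(1-\e_j)(a_j-b_j)$ we also get $b_{j+1}-b_j=\e_j(a_j-b_j)=\tfrac{\e_j}{(1-\e_j)\delta_j}\,u_{j+1}\in\IR\cdot K$; telescoping over a finite range gives $a_n-a_m=(a_n-b_{n+1})-(a_m-b_{m+1})+(b_{n+1}-b_{m+1})\in\IR\cdot K$ for all $n,m\in\w$. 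As $\{a_n-a_m:n,m\in\w\}$ is dense in $A-A$, the linear space $\IR\cdot K\supseteq\lin\{a_n-a_m\}$ is dense in $\lin(A-A)$, as required. I expect the real work to be exactly the recursion's margin-bookkeeping that forces the limiting compact convex set to stay inside the possibly non-closed set $A$; if $A$ were closed one could instead simply take $b=\sum_n t_na_n$ (a summable convex combination of a dense sequence) and $u_n=\delta_n(a_n-b)$ with $\delta_n$ small, and be done.
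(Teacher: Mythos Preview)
Your proposal is correct: the recursion can be completed with routine bookkeeping, the auxiliary lemma about the complete metric $d$ is valid, and the density argument at the end goes through. The approach, however, differs from the paper's in two technical respects.

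First, the paper never introduces a complete metric on $A$. Instead it translates so that $0\in A$, writes the Polish set $A$ as a $G_\delta$ in the completion, $A=\bigcap_n U_n$, fixes a dense sequence $\{a_n\}\subset A$, and inductively chooses scalars $\lambda_n>0$ so small that the compact simplices $\Delta_n=\{\sum_{k\le n}t_k\lambda_ka_k:t_k\in[0,2]\}$ sit inside $U_n$ with a fixed $\e_n$-cushion. Then every series $\sum_n t_n\lambda_n a_n$ with $t_n\in[0,2]$ converges in $\bar X$ and lands in $\bigcap_n U_n=A$; taking $c=\sum_n\lambda_na_n$ and $K=\{\sum_n t_n\lambda_na_n:t_n\in[-1,1]\}$ finishes the job. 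Second, the paper keeps the base point fixed at $0$, whereas you drift a base point $b_k$ toward each $a_k$ and carry along a shrinking margin $\rho_k$.

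What each buys: the paper's $G_\delta$ device converts ``stay inside $A$'' into the open condition ``stay inside $U_n$'', which is stable under small perturbations without any uniform-continuity lemma; this makes the construction a few lines long. Your approach is more elaborate but has the minor virtue of working intrinsically with the complete metric of $A$, never passing to $\bar X$. Your density step (showing $a_n-a_m\in\IR\cdot K$ by telescoping) parallels the paper's, where one simply has $a_n=\lambda_n^{-1}(\lambda_na_n)\in\IR\cdot K$ for free. One small redundancy: your clause ``$b_{k+1}+y+su_{k+1}\in A$ for all $y\in K_k$, $s\in[-1,1]$'' is not actually used in the Cauchy argument, since $x^{(k+1)}\in K_{k+1}$ and $b_{k+1}+K_{k+1}\subset A$ already follow from the margin condition~(ii).
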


\begin{proof} Replacing the convex set $A$ by a suitable shift of $A$, we can assume that $A$ contains zero.

Fix an invariant metric $d$ generating the topology of the linear metric space $X$ and let $\bar X$ be the completion of the linear metric space $(X,d)$. For a point $x\in X$ and a real number $\e>0$ by $B(x,\e)=\{y\in X:d(x,y)<\e\}$ and $\bar B(x,\e)=\{y\in X:d(y,x)\le\e\}$ we denote the open and closed $\e$-balls centered at $x$, respectively. The space $A$, being Polish, is a $G_\delta$-set in $\bar X$. So, we can write it as $A=\bigcap_{n\in\w}U_n$ for a descreasing family $(U_n)_{n\in\w}$ of open sets in $\bar X$. Fix a countable dense set $\{a_n\}_{n\in\w}$ in $A$.

Construct inductively two sequences of positive real numbers $(\e_n)_{n\in\w}$ and $(\lambda_n)_{n\in\w}$ such that for every $n\in\w$ the following conditions are satisfied:
\begin{enumerate}
\item $\max\{\lambda_n,\e_n\}<\frac1{2^{n+2}}$;
\item for every point $x$ in the compact set $\Delta_n=\{\sum_{k=0}^n t_k\lambda_ka_k:t_0,\dots,t_n\in[0,2]\}
    \}\subset A$ we get $\bar B(x,\e_n)\subset U_n$ and $x+[0,2\lambda_n]a_n\subset B(x,\e_n)$.
\end{enumerate}
The conditions (1), (2) imply that for every sequence $(t_n)_{n\in\w}\in[0,2]^\w$ the series $\sum_{n\in\w}t_n\lambda_na_n$ converges in $\bar X$ to some point  of the convex set $A=\bigcap_{n\in\w}U_n$. Put $c=\sum_{n\in\w}\lambda_na_n$ and observe that for every sequence $(t_n)_{n\in\w}\in[-1,1]^\w$ the series
$c+\sum_{n\in\w}t_n\lambda_na_n=\sum_{n\in\w}(1+t_n)\lambda_na_n$ converges to a point of $A$. It follows that the set $K=\{\sum_{n\in\w}t_n\lambda_na_n:(t_n)_{n\in\w}\in[-1,1]^\w\}$ is compact, convex, symmetric, and $c+K \subset A$.
It is clear that $\IR\cdot K\supset\{a_n\}_{n\in\w}$ is dense in the linear hull $\IR\cdot(A-A)$ of the set $A-A$.
\end{proof}

\begin{lemma}\label{l:op} If a linear metric space $X$ contains an $\aff$-dense Polish convex set $P$, then $X$ contains an $\aff$-dense compact convex set $K=-K$, which is $\{L\}$-thick for some dense operator image $L\subset X$.
\end{lemma}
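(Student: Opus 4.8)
The finite-dimensional case is trivial (take $K$ a cube and $L=X$, which is an operator image since a finite-dimensional linear metric space is a Banach space; any relatively open $U\subseteq K$ then has non-empty interior in $X$, so $X$ is covered by countably many translates of $\bar U$), so I assume $X$ is infinite-dimensional.

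The plan is first to re-run the construction from the proof of Proposition~\ref{p:conv-sym} on the set $P$ in order to produce a sequence $(b_n)_{n\in\w}$ in $X$ such that the linear hull $\lin\{b_n:n\in\w\}$ is dense in $X$ (this is where the $\aff$-density of $P$ enters) and the map $\phi\colon[-1,1]^\w\to X$, $\phi\big((t_n)_{n\in\w}\big)=\sum_{n\in\w}t_nb_n$, is well defined and continuous. I then put
$$K:=\phi\big([-1,1]^\w\big)=\Big\{\textstyle\sum_{n\in\w}t_nb_n:(t_n)_{n\in\w}\in[-1,1]^\w\Big\}.$$
Being the image of the compact convex symmetric set $[-1,1]^\w$ under the continuous affine odd map $\phi$, the set $K$ is compact, convex and symmetric; since $0\in K$ and $\IR\cdot K\supseteq\lin\{b_n\}$ is dense, $K$ is $\aff$-dense in $X$. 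For the operator image I would take the $\ell_1$-image of the $b_n$'s rather than the full span of $K$: let $T\colon\ell_1\to X$, $T\big((c_n)_{n\in\w}\big)=\sum_{n\in\w}c_nb_n$; since the unit ball $B_{\ell_1}$ is contained in $[-1,1]^\w$, the operator $T$ is well defined, linear and continuous (one has $T(\delta\cdot B_{\ell_1})\subseteq\delta K$, and $\delta K\to\{0\}$ because the compact set $K$ is bounded), and $L:=T(\ell_1)=\IR\cdot T(B_{\ell_1})$ is a dense operator image in $X$ (it contains $\lin\{b_n\}$, which is dense). It remains to check that $K$ is $\{L\}$-thick.

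This is the heart of the matter. The naive candidate, $L=\IR\cdot K$ (the $\ell_\infty$-image), does \emph{not} work: an $m$-fold dilate of a Hilbert-cube-like $K$ cannot be covered by countably many translates of a fixed basic neighbourhood, since one would need about $m$ translates in each of infinitely many coordinate directions. The $\ell_1$-image $L$ succeeds precisely because an element $\sum c_nb_n$ of $T(M\cdot B_{\ell_1})$ has $\sum|c_n|\le M$, hence at most $2M$ coordinates with $|c_n|>1/2$. Concretely: given non-empty open $U\subseteq K$, the set $\phi^{-1}(U)$ is open and non-empty in the compact space $[-1,1]^\w$, hence contains a basic box $\prod_{n\le N}J_n\times\prod_{n>N}[-1,1]$ with each $J_n$ a non-empty relatively open subset of $[-1,1]$; taking closures and using continuity of $\phi$ one obtains $\bar U\supseteq\Psi_N+K^{>N}$, where $\Psi_N=\{\sum_{n\le N}t_nb_n:t_n\in\bar J_n\}$ and $K^{>N}=\{\sum_{n>N}s_nb_n:s_n\in[-1,1]\}$. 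Now I would cover $L$ by countably many translates of $\Psi_N+K^{>N}$, treating the two blocks of coordinates by different mechanisms. For the $N$ low coordinates: $\lin\{b_1,\dots,b_N\}$ is finite-dimensional, hence $\sigma$-compact, and $\Psi_N$ has non-empty interior in it (image of a box with non-empty interior under a linear surjection of finite-dimensional spaces), so $\lin\{b_1,\dots,b_N\}\subseteq C'+\Psi_N$ for some countable $C'$. For the infinitely many high coordinates: given $\sum_{n>N}c_nb_n$ with $\sum_{n>N}|c_n|\le M$, let $v_n$ be the nearest integer to $c_n$; then $\#\{n>N:v_n\neq0\}\le 2M$ and $|v_n|\le M+1$, so, as $M=\lceil\|(c_n)\|_1\rceil$ runs over $\IN$, the sum $\sum_{n>N}v_nb_n$ ranges over a countable set $C''$, while $\sum_{n>N}(c_n-v_n)b_n\in K^{>N}$ because $|c_n-v_n|\le1/2$. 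Splitting an arbitrary element of $L$ as $\sum_{n\le N}c_nb_n+\sum_{n>N}c_nb_n$ and combining the two estimates yields $L\subseteq(C'+C'')+\Psi_N+K^{>N}\subseteq(C'+C'')+\bar U$ with $C'+C''$ countable, i.e.\ $K$ is $\{L\}$-thick.

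I expect the obstacles to be twofold. The conceptual one is realising that $L$ must be the $\ell_1$-operator image $T(\ell_1)$ and not $\IR\cdot K$, together with the coordinate-splitting idea that handles the finitely many ``bad'' coordinates via $\sigma$-compactness of a finite-dimensional subspace and the tail via the $\ell_1$-sparsity of large coordinates, so that both contribute only countably many translates. The technical one is the very first step: extracting from Proposition~\ref{p:conv-sym} a sequence $(b_n)$ for which $\phi$ is genuinely continuous on $[-1,1]^\w$ — this is exactly the delicate point in that proposition, forced by the possible lack of local convexity of $X$.
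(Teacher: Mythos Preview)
Your argument is correct, but the paper takes a slicker route by choosing $K$ differently. Instead of the full Hilbert-cube image $\phi([-1,1]^\w)$, the paper first extracts from Proposition~\ref{p:conv-sym} a compact convex symmetric set $S\subset X$ with $\IR\cdot S$ dense, picks a dense sequence $(x_n)$ in $S$, scales by suitable $\lambda_n\in(0,1]$ so that $T:\ell_1\to X$, $T\big((t_n)\big)=\sum t_n\lambda_n x_n$, is continuous, and then sets $K:=\overline{T(B_{\ell_1})}\subset S$. With this smaller $K$ the $\{L\}$-thickness for $L=T(\ell_1)$ becomes a one-liner: any non-empty relatively open $U\subset K$ meets the dense subset $T(B_{\ell_1})$, so $T^{-1}(U)$ contains an interior point of $B_{\ell_1}$ and hence a non-empty open set $V\subset\ell_1$; separability of $\ell_1$ gives a countable $A_1$ with $\ell_1=A_1+V$, and then $L=T(\ell_1)\subset T(A_1)+T(V)\subset T(A_1)+U$.

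Your coordinate-splitting argument is precisely what is forced by taking the larger $K$: a basic neighbourhood in the cube constrains only finitely many coordinates, so you must absorb the tail via the $\ell_1$-sparsity trick (at most $2M$ coordinates exceed $1/2$). That is correct and your diagnosis that $L=\IR\cdot K$ would fail is spot on, but the paper sidesteps the whole difficulty by shrinking $K$ so that it is essentially the $\ell_1$-ball image to begin with; then the covering problem lifts verbatim to $\ell_1$, where separability does all the work. Your approach buys a slightly larger $K$ (genuinely the cube), at the cost of a substantially longer thickness argument; the paper's buys brevity.
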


\begin{proof} By Proposition~\ref{p:conv-sym}, there is a compact convex set $S=-S$ in $X$ such that $p+S\subset P$ for some $p\in P$ and the linear space $\IR\cdot S$ is dense in $\IR\cdot (P-P)$ and hence is dense in $X$. Choose a countable dense set $\{x_n\}_{n\in\w}$ in $S$ and find a sequence of real numbers $(\lambda_n)_{n\in\w}\in(0,1]^\w$ such that the linear operator $T:\ell_1\to X$, $T:(t_n)_{n\in\w}\mapsto\sum_{n=1}^\infty t_n\lambda_nx_n$, is well-defined and continuous. Here $\ell_1$ is the Banach space of real sequences $t=(t_n)_{n\in\w}$ with the norm $\|t\|=\sum_{n\in\w}|t_n|<\infty$. It is clear that the operator image $T(\ell_1)$ is dense in $X$. Denote by $B=\{t\in\ell_1:\|t\|\le 1\}$ the closed unit ball of the Banach space $\ell_1$ and let $K$ be the closure of the set $T(B)$ in $S$. It is clear that $K$ is a compact convex symmetric subset of $S$ and the affine hull $\IR\cdot K\supset T(\ell_1)$ is dense in $X$. We claim that the convex set $K$ is $\{T(\ell_1)\}$-thick. Given a non-empty open set $U\subset K$ we need to find a countable set $A\subset X$ such that $T(\ell_1)\subset A+U$. Since the set $T(B)$ is dense in $K$, the intersection $U\cap T(B)$ is not empty and hence the preimage $V=T^{-1}(U)$ contains some non-empty open subset of the ball $B$. The separability of the Banach space $\ell_1$ yields a countable set $A_1\subset\ell_1$ such that $\ell_1=A_1+V$. Then the countable set $A=T(A_1)$ has the required property:
$T(\ell_1)=T(A_1)+T(V)\subset A+U$.
\end{proof}

For a linear metric space $X$ denote by $\vec\LL_\infty(X)$ the family of dense operator images in $X$. To simplify notations, denote the family $\bigcup_{L\in\vec\LL_\infty(X)}\sigma\dot \Z_{\{L\}}(X)$ by $\sigma\vec\Z_\infty(X)$. Since $\vec\LL_\infty(X)\subset \LL_\infty(X)$, we get the inclusions
$$\sigma\vec\Z_\infty(X)\subset \sigma\dot \Z_\infty(X)\subset\sigma\dot\Z_\w(X)\subset\sigma\Z_\w(X).$$

\begin{proposition} A subset $A$ of a separable metric linear space $X$ belongs to the family $\sigma\vec\Z_\infty(X)$ if and only if there exists a $\sigma$-compact dense operator image $L$ in $X$ and a sequence $(A_n)_{n\in\w}$ of closed subsets of $X$ such that $A\subset\bigcup_{n\in\w}A_n$ for every  $n\in\w$ and compact subset $K\subset L$ the set $K\cdot A_n$ is nowhere dense in $X$.
\end{proposition}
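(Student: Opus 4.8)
The statement is, up to writing products additively (since $X$ is a linear space), a repackaging of three facts: the definition $\sigma\vec\Z_\infty(X)=\bigcup_{L\in\vec\LL_\infty(X)}\sigma\dot\Z_{\{L\}}(X)$; the meaning of ``$A_n$ is a $\dot Z_{\{L\}}$-set'', namely that $A_n$ is closed in $X$ and the set $L+\bar A_n$ has empty interior in $\bar X$, where $\bar A_n$ denotes the closure of $A_n$ in $\bar X$; and the passage between an arbitrary dense operator image and a $\sigma$-compact one. Throughout one uses that the completion $\bar X$ of the separable linear metric space $X$ is a Polish linear metric space, hence a Baire space, and that for a compact set $K$ and a closed set $F$ in $\bar X$ the sum $K+F$ is closed in $\bar X$ (the addition map on $K\times F$ being closed by \cite[3.7.18]{En}).

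\textbf{The ``if'' part.} Assume $L=\bigcup_{m\in\w}K_m$, with $K_m\subset L$ compact, is a $\sigma$-compact dense operator image, and $(A_n)_{n\in\w}$ are closed subsets of $X$ with $A\subset\bigcup_{n}A_n$ such that $K+A_n$ is nowhere dense in $X$ for every $n\in\w$ and every compact $K\subset L$. Fix $n$. For each $m$ the set $K_m+A_n$, nowhere dense in the dense subspace $X\subset\bar X$, is dense in the closed set $K_m+\bar A_n$, so $K_m+\bar A_n=\overline{K_m+A_n}$ is nowhere dense in $\bar X$. Hence $L+\bar A_n=\bigcup_{m}(K_m+\bar A_n)$ is meager, and so has empty interior, in the Baire space $\bar X$. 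Thus each $A_n$ is a $\dot Z_{\{L\}}$-set in $X$, so $A\in\sigma\dot\Z_{\{L\}}(X)\subset\sigma\vec\Z_\infty(X)$ since $L\in\vec\LL_\infty(X)$.

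\textbf{The ``only if'' part.} Let $A\in\sigma\vec\Z_\infty(X)$; fix a dense operator image $L=T(B)$, where $T\colon B\to X$ is a continuous linear operator on a Banach space $B$, and closed sets $A_n\subset X$ with $A\subset\bigcup_{n}A_n$ and $L+\bar A_n$ of empty interior in $\bar X$. The plan is to keep the sets $A_n$ and replace $L$ by a $\sigma$-compact dense operator image $L'$ contained in the linear subspace $L$. Choose points $l_k\in L$, $k\in\w$, dense in $X$ (possible since $X$ is separable and $L$ is dense) and $b_k\in B$ with $T(b_k)=l_k$, and pick reals $\mu_k>0$ with $\sum_{k}\mu_k(1+\|b_k\|)<\infty$; then $\mu_kb_k\to 0$ in $B$, so the closed convex symmetric hull $\widehat Q$ of $\{\mu_kb_k:k\in\w\}\cup\{0\}$ in $B$ is compact, and $K_0:=T(\widehat Q)$ is a compact convex symmetric subset of $L=T(B)$ with $\mu_kl_k=T(\mu_kb_k)\in K_0$ for all $k$. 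Hence $L':=\IR\cdot K_0\subset L$ contains every $l_k$ and is dense in $X$. The Minkowski gauge $\|x\|_{K_0}=\inf\{t>0:x\in tK_0\}$ is a norm on $L'$ ($K_0$ being bounded and symmetric), the inclusion $(L',\|\cdot\|_{K_0})\hookrightarrow X$ is (uniformly) continuous ($K_0$ being bounded in $X$), and $(L',\|\cdot\|_{K_0})$ is complete: a $\|\cdot\|_{K_0}$-Cauchy sequence is Cauchy for the metric of $X$, hence converges in $\bar X$ to a point that, by the closedness of the sets $tK_0$ in $\bar X$, belongs to $L'$ and is its $\|\cdot\|_{K_0}$-limit. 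Therefore $L'=\bigcup_{m\in\IN}mK_0$ is a $\sigma$-compact dense operator image, i.e.\ $L'\in\vec\LL_\infty(X)$. Finally, for every $n$ and every compact $K\subset L'\subset L$ the set $K+\bar A_n$ is closed in $\bar X$ and contained in $L+\bar A_n$, a set of empty interior in $\bar X$; being closed with empty interior, $K+\bar A_n$ is nowhere dense in $\bar X$, so $K+A_n\subset(K+\bar A_n)\cap X$ is nowhere dense in $X$. Thus $L'$ together with the sets $A_n$ witnesses the right-hand side of the equivalence.

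\textbf{Expected main obstacle.} The only step that is not routine bookkeeping is this extraction, inside the ``only if'' part, of a \emph{$\sigma$-compact} dense operator image from the given $L$: it rests on the folklore fact (already implicit in Proposition~\ref{p:conv-sym} and Lemma~\ref{l:op}) that the linear span $\IR\cdot K_0$ of a compact convex symmetric set $K_0$ in a linear metric space, equipped with the Minkowski gauge $\|\cdot\|_{K_0}$, is a Banach space whose identity inclusion into $X$ is continuous --- this is exactly what makes $\IR\cdot K_0=\bigcup_{m}mK_0$ simultaneously $\sigma$-compact and an operator image. Everything else (closedness of addition on compact-times-closed sets, and the Baire-category computation) is already present in the proof of Proposition~\ref{p:dotZn}.
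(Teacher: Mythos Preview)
Your proof is correct and follows the same overall architecture as the paper: the ``if'' part is exactly the Baire-category computation the paper alludes to (``by analogy with Proposition~\ref{p:dotZn}''), and in the ``only if'' part you, like the paper, keep the witnessing sets $A_n$ and replace the given dense operator image $L$ by a $\sigma$-compact dense operator image $L'\subset L$.

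The one genuine difference is in how this $\sigma$-compact $L'$ is produced. The paper works operator-theoretically: it passes to a separable Banach subspace $B'\subset B$, builds a compact operator $T'\colon\ell_2\to B'$ with dense range, and then uses that the composite $T\circ T'\colon\ell_2\to X$ is compact (so the image of the unit ball is compact and $L'=T\circ T'(\ell_2)$ is $\sigma$-compact). You instead work geometrically: you push a compact convex symmetric set $\widehat Q\subset B$ forward to $K_0=T(\widehat Q)$ and equip $L'=\IR\cdot K_0$ with the Minkowski gauge $\|\cdot\|_{K_0}$, verifying directly that this is a Banach norm with continuous inclusion into $X$. Your route is a bit more self-contained (it avoids invoking compact operators and reflexivity of $\ell_2$), at the cost of spelling out the completeness of $(L',\|\cdot\|_{K_0})$; the paper's route is shorter once one is willing to quote those facts. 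Both constructions yield the same conclusion, and the final nowhere-density argument for $K+A_n$ is identical.
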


\begin{proof} The ``if'' part of this proposition can be proved by analogy with Proposition~\ref{p:dotZn}. To prove the ``only'' if part, assume that $A\in \sigma\vec Z_\infty(X)$. Then $A\in\sigma\dot\Z_{\{L\}}(X)$ for some dense operator image $L$ in $X$. Write $L=T(B)$ for some linear continuous operator $T:B\to X$ defined on a Banach space $B$. Since the space $L$ is separable, we can find a separable Banach subspace $B'\subset B$ such that the operator image $L'=T(B')$ is dense in $T(B)$.  Choose a bounded sequence $(x_n)_{n\in\w}$ in $B'$ whose linear hull is dense in $B'$. It is standard to show that the operator $T':\ell_2\to B'$, $T':(t_n)_{n\in\w}\mapsto\sum_{n\in\w}\frac{t_n}{2^n}x_n$, is well-defined, compact, and has  dense image $T'(\ell_2)$ in $B'$. Then the operator $T'\circ T:\ell_2\to X$ is compact and has dense image $L'=T'\circ T(\ell_2)$ in $X$. It follows from $L'\subset L$ that $A\in \sigma\dot\Z_{L}(X)\subset\sigma\dot \Z_{L'}(X)$. So, we lose no generality assuming that $B=\ell_2$ and the operator $T$ is compact. By the compactness of the operator $T$ and the reflexivity of $\ell_2$, the image $T(B_1)$ of the closed unit ball in $B_1$ of the Hilbert space $\ell_2$ is compact. This implies that the operator image $L=T(\ell_2)$ is $\sigma$-compact.
Since $A\in\sigma\dot\Z_{\{L\}}(X)$, there is a sequence $(A_n)_{n\in\w}$ of closed subsets $A_n$ of $X$ such that $L+\bar A_n$ has empty interior in $X$. Then for every compact subset $K\subset L$ the closed set $K+\bar A_n$ has empty interior and hence is nowhere dense in $X$. Then the set $K+A_n$ is nowhere dense in $X$.
\end{proof}

Applying Corollary~\ref{c:main} and Lemma~\ref{l:op} to the family $\vec\LL_\infty(X)$, we can prove the following corollary (by analogy with Theorem~\ref{t:main-convex}).

\begin{theorem} For any analytic subsets $A,B\notin\sigma\vec\Z_\infty(X)$ of a linear metric space $X$ and any  $\aff$-dense convex Polish set $C$ in $X$, the sumset $A+B+C$ has non-empty interior in the completion $\bar X$ of $X$. Moreover, if $A$ is additive or convex, then the sumset $A+C$ has non-empty interior in $\bar X$.
\end{theorem}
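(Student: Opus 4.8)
The plan is to mimic the proof of Theorem~\ref{t:main-convex} step for step, using Lemma~\ref{l:op} where that proof invoked Proposition~\ref{p:convthick}, and the family $\sigma\vec\Z_\infty(X)$ in place of $\sigma\dot\Z_\infty(X)$. First I would clear away the routine reductions: since the Polish convex set $C$ is separable, its affine hull is a separable dense subset of $X$, so $X$ is a separable metrizable abelian group and $\bar X$ is a Polish linear metric space coinciding with the Raikov completion of $X$ --- which is the setting Corollary~\ref{c:main} requires.

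Next I would extract a thick piece of $C$. Inspecting the proof of Lemma~\ref{l:op} applied to the $\aff$-dense Polish convex set $C$ (in the role of $P$), one obtains a dense operator image $L\subset X$ and a compact convex subset $C'\subset C$ --- concretely the shift $p+K$ of the symmetric compact convex set $K$ produced there --- which is $\{L\}$-thick. Since $L=T(B)$ is the image of a Banach space under a linear operator, $L$ is a linear subspace of $X$, hence $L=\frac12 L$; combining this with the fact that translations and homotheties of $X$ carry closures to closures and countable sets to countable sets, one checks directly from the definition of $\{L\}$-thickness that $C'=p+K$ is $\{L\}$-thick and that the compact convex homothetic copy $\frac12 C'$ is $\{L\}$-thick as well. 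Finally, as $L\in\vec\LL_\infty(X)$ we have $\sigma\dot\Z_{\{L\}}(X)\subset\sigma\vec\Z_\infty(X)$, so neither $A$ nor $B$ belongs to $\sigma\dot\Z_{\{L\}}(X)$.

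Then I would feed all this into Corollary~\ref{c:main} with the family $\DD=\{L\}$. Applied to the densely-Polish $\{L\}$-thick sets $E=F=\frac12 C'$, the corollary yields that $A+\frac12 C'+B+\frac12 C'$ has non-empty interior in $\bar X$, and since convexity of $C'$ gives $\frac12 C'+\frac12 C'\subset C'\subset C$, the larger set $A+B+C$ has non-empty interior in $\bar X$. The same argument with $B$ replaced by $A$ shows that $A+A+\frac12 C'+\frac12 C'\subset A+A+C$ has non-empty interior, and the variant with $E=F=C'$ shows that $A+A+C'+C'\subset A+A+C+C$ has non-empty interior. If $A$ is additive, then $A+A\subset A$, so $A+A+C\subset A+C$ and $A+C$ has non-empty interior. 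If $A$ is convex, then, using convexity of both $A$ and $C$, $\frac12(A+A+C+C)=\frac12 A+\frac12 A+\frac12 C+\frac12 C\subset A+C$, and since $x\mapsto\frac12 x$ is a homeomorphism of $\bar X$, the set $A+C$ has non-empty interior as well.

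The only genuinely hard ingredient is the one already packaged in Lemma~\ref{l:op} (and, behind it, Proposition~\ref{p:conv-sym}): producing, inside an $\aff$-dense Polish convex set, a compact convex subset that is $\{L\}$-thick for a dense \emph{operator image} $L$ rather than merely a dense linear subspace. Granting that, the rest is bookkeeping with translates and midpoints of convex sets, strictly parallel to Theorem~\ref{t:main-convex}. The two spots that still deserve a line of care are the separability reduction (needed because Corollary~\ref{c:main} is stated for separable metrizable groups) and the verification that $\{L\}$-thickness is inherited by the translate $p+K$ and by $\frac12 C'$ --- for which the crucial point is precisely that the operator image $L$ is a linear subspace, hence invariant under $x\mapsto\frac12 x$.
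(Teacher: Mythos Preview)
Your proposal is correct and follows exactly the route the paper indicates: it says only that the theorem is proved ``by analogy with Theorem~\ref{t:main-convex}'' using Corollary~\ref{c:main} and Lemma~\ref{l:op}, and you have filled in precisely those details. The extra care you take --- noting separability, extracting from the proof of Lemma~\ref{l:op} that a translate $p+K$ actually lies inside $C$, and checking that $\{L\}$-thickness passes to translates and to $\frac12 C'$ because the operator image $L$ is a linear subspace --- is all appropriate and matches what the paper's terse ``by analogy'' leaves implicit.
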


This theorem has

\begin{corollary} If a non-complete linear metric space $X$ contains a Polish $\aff$-dense convex set $C$, then every analytic subset of $X$ belongs to the $\sigma$-ideal $\dot \Z_{\{L\}}(X)$ for some dense operator image $L$ in $X$.
\end{corollary}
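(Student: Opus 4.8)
The plan is to reduce the statement to Corollary~\ref{c:non-complete-D} applied to the one-element family $\DD=\{L\}$, taking for $L$ the dense operator image provided by Lemma~\ref{l:op}. For this to be legitimate I first need two preliminary observations about $X$. Since the convex set $C$ is Polish, it is separable, and since $C$ is $\aff$-dense its affine hull is a separable dense subset of $X$; hence $X$ is a separable metrizable group (under addition) and its completion $\bar X$ is a Polish linear metric space. Moreover $X$ is not Polish: a Polish subspace of $\bar X$ is a $G_\delta$-set, while a dense $G_\delta$ subgroup $H$ of a Polish group must be the whole group (for every $x$ both $H$ and $x+H$ are dense $G_\delta$, hence meet, so $x\in H$); thus if $X$ were Polish it would coincide with $\bar X$, contradicting the assumed non-completeness of $X$.

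Next I would apply Lemma~\ref{l:op} (which in turn rests on Proposition~\ref{p:conv-sym} and Proposition~\ref{p:convthick}): the linear metric space $X$ contains an $\aff$-dense compact convex set $K=-K$ that is $\{L\}$-thick for some dense operator image $L\subset X$. Being compact and metrizable, $K$ is Polish, and it is dense in itself, so $K$ is a densely-Polish $\{L\}$-thick subset of the non-Polish separable metrizable group $(X,+)$. Now Corollary~\ref{c:non-complete-D}, applied with $G=X$ and $\DD=\{L\}$, yields that every analytic subset of $X$ belongs to the $\sigma$-ideal $\sigma\dot\Z_{\{L\}}(X)$, which is exactly the assertion.

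Since the genuinely nontrivial construction --- a dense operator image $L$ together with a compact convex $\{L\}$-thick set inside $X$ --- is already packaged in Lemma~\ref{l:op}, I anticipate no serious obstacle here beyond a careful check of the hypotheses of Corollary~\ref{c:non-complete-D}, namely separability and non-Polishness of $X$ and densely-Polishness of the witness $K$. If one wishes to avoid citing Corollary~\ref{c:non-complete-D} verbatim, the same conclusion can be obtained directly by contradiction: suppose some analytic set $A\subset X$ were not in $\sigma\dot\Z_{\{L\}}(X)$; then Corollary~\ref{c:main}, applied with $\DD=\{L\}$, with the two densely-Polish $\{L\}$-thick sets both taken to be $K$, and with both analytic sets taken to be $A$, would force $K+A+K+A$ to have non-empty interior in $\bar X$; but $K+A+K+A\subset X$, so the subgroup $X$ would have non-empty interior in $\bar X$, hence be an open (and therefore closed) subgroup of the connected group $\bar X$, forcing $X=\bar X$ and again contradicting non-completeness.
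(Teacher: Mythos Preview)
Your proposal is correct and follows essentially the same route the paper intends: invoke Lemma~\ref{l:op} to obtain a compact convex $\{L\}$-thick set for a dense operator image $L$, then apply Corollary~\ref{c:non-complete-D} (equivalently, the contrapositive via Corollary~\ref{c:main}) to the family $\DD=\{L\}$. Your preliminary checks on separability and non-Polishness of $X$, and on $K$ being densely-Polish, are exactly the hypotheses that need verifying, and they are all handled correctly.
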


\end{document}